\numberwithin{equation}{section}
\numberwithin{figure}{section}
\theoremstyle{plain}
\newtheorem{thm}{\protect\theoremname}[section]
  \theoremstyle{remark}
  \newtheorem{rem}[thm]{\protect\remarkname}
  \theoremstyle{plain}
  \newtheorem{cor}[thm]{\protect\corollaryname}
  \theoremstyle{plain}
  \newtheorem{prop}[thm]{\protect\propositionname}
  \theoremstyle{plain}
  \newtheorem{lem}[thm]{\protect\lemmaname}
\newtheorem{hyp}{Assumption}
\definecolor{xdxdff}{rgb}{0.49,0.49,1.}
\definecolor{qqqqff}{rgb}{0.,0.,1.}
\definecolor{ffqqqq}{rgb}{1.,0.,0.}
  \providecommand{\corollaryname}{Corollary}
  \providecommand{\lemmaname}{Lemma}
  \providecommand{\propositionname}{Proposition}
  \providecommand{\remarkname}{Remark}
\providecommand{\theoremname}{Theorem}
\begin{document}

\title[H.-S. Representation for a Ginzburg-Landau Process]{A Note on Helffer-Sjöstrand Representation for a Ginzburg-Landau
Process}

\author{Paul de Buyer}
\address[P. de Buyer]{Université Paris Nanterre - Modal'X, 200 avenue
de la République 92000 Nanterre, France}
\email{debuyer@math.cnrs.fr}

\keywords{Helffer-Sjöstrand Representation, Convergence towards equilibrium,
Stochastic Process, Conservative Systems}

\subjclass{60K35, 82C22, 82C41}

\begin{abstract}
In this work, we explore a link between an unbounded spin system and
a random walk. This allows us to study the decay of the (co)variance
of functions with respect to time. We extend here the previous work
of T. Bodineau and G. Graham \cite{bodineauGraham} to a more general
class of graph and potential. 
\end{abstract}

\maketitle

\section{Introduction}

A model of Ginzburg-Landau is a conservative model defined from a
system of stochastic differential equations whose drift is given by
a gradient (in the discrete sense) of a function, called the potential
function. On each vertex $x$ of a graph $G$, we assign a real value
$\eta_{x}$ called a mass which evolves according to the value its
neighbors and random part given by Brownian motion. These kind of
dynamics can be seen as hydrodynamic limit of particle system, see
\cite{spohn2012large}, and in this context has been studied in the
80's with a series of article \cite{FritzHydro,FritzResolvent,FritzHydroLLN,Funaki90,Funaki89}. 

Central limit theorem has been broadly discussed, and we refer the
reader to the Ph.D. thesis of J. Sheriff \cite{sheriff2011central}
which is fully dedicated to this subject. The spectral gap and the
log-Sobolev inequality is also studied in particular in \cite{bodineauGraham,chafai2002glauber,LandimPanizoYau,ledoux2001logarithmic}.

In this work, following the approach of \cite{bodineauGraham,RobertoCancriniHSGL},
we use the Helffer-Sjöstrand representation to link the evolution
of the spin system with a random walk in random dynamic environment.
This representation, see \cite{helffer1999remarks2,helffer1999remarks3,helffer2002semiclassical,helfferSjostrandcorrelation},
relies on the commutation of an operator with a gradient giving rise
to a second operator, called the Witten-Laplacian, which has, under
some assumptions, a probabilistic interpretation. A closely related
model, where this method has been used, is the $\nabla\varphi$ interface
model, in \cite{FunakiGradPhi}, which is the same model as ours in
dimension~$1$. 

Furthermore, the space-time correlation in $\mathbb{Z}^{d}$ between
two masses, in \cite{spohn2012large}, is conjectured to behave in
the following way:
\[
\mathbb{C}ov\left(\eta_{x}\left(0\right);\eta_{y}\left(t\right)\right)\simeq\frac{C_{1}}{t^{d/2}}\exp\left(\frac{-\left|x-y\right|^{2}}{C_{2}t}\right)
\]
where $C_{1}$ and $C_{2}$ are explicit. Using the connection between
time-evolution of the covariance of masses and a random walk should
lead to theses estimates. As a short intuition, one can understand
that the behavior of a random walk is encoded by the behavior of the
diffusion of the mass; therefore, the behavior of the masses should
be encoded by the behavior of a particle (of the masses).

In this work, we solve the difficulty encountered in \cite{bodineauGraham}
while trying to use the Helffer-Sjöstrand representation on general
graph by introducing a site approach. The paper is organized in the
following way. In the second section, we introduce the notations,
the model and the results. In the third section, we give the representation
and prove the main theorem. In the last section, we prove the auxiliary
results. 

\section{Notations and Results}

In this section, we introduce the notations, the model and the result.

Let $G=\left(V,E\right)$ be a connected graph, where $V$ is the
set of vertices and $E$ the set of unoriented edges. We assume that
the degree of each vertex is uniformly bounded by a constant $d\geqslant0$.
Furthermore, we fix a vertex and call it origin, denoted $0$. We
note $d_{G}$ the graph distance and $\left|\cdot\right|$ the distance
of a point to the origin. For practical reasons, we give an arbitrary
orientation to the edges and note $\overrightarrow{B}$ the set of
these edges; we note $\overleftarrow{B}$ the set of reversed oriented
edges, \emph{i.e.} $\overleftarrow{B}=\left\{ \left(y,x\right):\left(x,y\right)\in\overrightarrow{B}\right\} $.
Furthermore, we introduce $B=\overleftarrow{B}\cup\overrightarrow{B}$.
To standardize the notations, we write $e=\left\{ x,y\right\} $ for
an unoriented edges and $b=\left(x,y\right)$ for an oriented one
(belonging to $\overrightarrow{B}$ or $\overleftarrow{B}$). For
two vertices $x$ and $y$, we note $x\sim y$ iff the edge $\left\{ x,y\right\} \in E$.
Moreover, for all $x\in V$, we define $sgn_{x}:B\rightarrow\left\{ -1;0;1\right\} $
the function which assign to an oriented edge $b=\left(y,z\right)$
associates :
\[
sgn_{x}\left(b\right)=\begin{cases}
-1 & \mbox{if }y=x\\
1 & \mbox{if }z=x\\
0 & \mbox{else}
\end{cases}
\]
We call an environment, an element $\eta=\left(\eta_{x}\right)_{x\in V}\in\Omega=\mathbb{R}^{V}$
and introduce the partial order $\geqslant$: for any two environments
$\sigma$ and $\eta$, $\sigma\geqslant\eta$ if and only if $\forall x\in V,\sigma_{x}\geqslant\eta_{x}$.
Thereby, we define the notion of increasing function, meaning that
$f$ is an increasing function if $\forall\eta$, $\sigma$ : 
\[
\sigma\geqslant\eta\Rightarrow f\left(\sigma\right)\geqslant f\left(\eta\right)
\]
Let $\left(V_{x}\right)_{x\in V}$ be a family of functions such that
$\int\exp\left(-V_{x}\left(t\right)\right)dt=1$ which satisfied the
following assumptions:

\begin{hyp}[Mean] \label{hyp:GLIndependance} 
There exists a constant $M$ such that for all $x\in V$, $\int_\mathbb{R} t\times \exp(-V_x(t))dt=M$.
\end{hyp}

\begin{hyp}[Potential $C^2$] \label{hyp:GLHypotheseC2} 
for all $x\in V$, $V_x\in C^2$.
\end{hyp}

\begin{hyp}[Strict convexity] \label{hyp:GLHypotheseConvexite} 
There exists two constants $C_+\geqslant C_- >0$ such that for all $\eta \in \Omega , x\in V$,  $C_+\geqslant V_x''(\eta ) \geqslant C_-$.
\end{hyp}

We define the function $H:\Omega\rightarrow\mathbb{R}$, called the
potential function, given by $\forall\eta,H\left(\eta\right)=\sum_{x}V_{x}\left(\eta\right)$. 

Finally, we introduce operators of partial derivatives for all $x\in V$,$\partial_{x}=\frac{d}{d\eta_{x}}$,
and for all $b=\left(x,y\right)\in B$, $\partial_{b}=\partial_{x}-\partial_{y}$.
The dynamic of Ginzburg-Landau is defined by the following system
of differential equations:
\begin{equation}
d\eta_{x}\left(t\right)=\sum_{b\in\overrightarrow{B}}sgn_{x}\left(b\right)\left(\partial_{b}H\left(\eta\right)dt+\sqrt{2}dB_{b}\left(t\right)\right),x\in S\label{eq:GLEDS}
\end{equation}
where $\left(B_{b}\left(t\right)\right)_{b\in\overrightarrow{B}}$
is a family of independent Brownian motion indexed by the set of oriented
edges. From these SDE, we extract the infinitesimal generator $\mathcal{L}_{e}$
which describes the evolution of an environment. The subscript $e$
has been chosen to signify that the operator describes the evolution
of the $e$nvironment. The generator is given by:
\begin{eqnarray}
\mathcal{L}_{e}f\left(\eta\right) & = & -\frac{1}{2}\sum_{x,y:x\sim y}\left(\partial_{x}-\partial_{y}\right)^{2}f\left(\eta\right)\nonumber \\
 &  & \qquad+\frac{1}{2}\sum_{x,y:x\sim y}\left(\partial_{x}-\partial_{y}\right)H\left(\eta\right)\times\left(\partial_{x}-\partial_{y}\right)f\left(\eta\right)\nonumber \\
 & = & -\sum_{x,y:x\sim y}\partial_{x}\left(\partial_{x}-\partial_{y}\right)f\left(\eta\right)\nonumber \\
 &  & \qquad+\sum_{x,y:x\sim y}\partial_{x}H\left(\eta\right)\times\left(\partial_{x}-\partial_{y}\right)f\left(\eta\right)\label{eq:GLGenerateurdx}\\
 & = & -\sum_{b\in\overrightarrow{B}}\partial_{b}\partial_{b}f\left(\eta\right)+\partial_{b}H\left(\eta\right)\partial_{b}f\left(\eta\right)\nonumber 
\end{eqnarray}
where $f$ is a local function, twice differentiable and with a finite
norm $\left\Vert f\right\Vert =\sum_{x\in S}\left\Vert \partial_{x}f\right\Vert _{\infty}$;
these three assumptions on the functions will always be assumed in
the rest of the article. We note for all generator $L$, the associated
semigroup $\left(P_{t}^{L}\right)=\left(e^{-Lt}\right)_{t\geqslant0}$,
in particular, when there is no ambiguity, we will omit the superscript
for the semigroup associated to the generator $\mathcal{L}_{e}$,
meaning that $\left(P_{t}\right)_{t\geqslant0}\colon=\left(P_{t}^{\mathcal{L}_{e}}\right)_{t\geqslant0}$.
A reversible measure of the Ginzburg-Landau process on a finite graph
is the Gibbs measure $\mu$ given by: 
\[
d\mu\left(\eta\right)=\exp\left(-H\left(\eta\right)\right)d\eta=\prod_{x\in S}\exp\left(-V_{x}\left(\eta_{x}\right)\right)d\eta
\]
This definition can be easily extended in the case of infinite graph
under assumption 1, 2 and 3. For simplicity, we will use the probabilistic
notations, meaning that $\mathbb{P}$ will denote the reversible measure,
$\mathbb{E}$ the associated mean, and $\mathbb{C}ov\left(f;g\right)=\mathbb{E}\left[\left(f-\mathbb{E}\left[f\right]\right)\left(g-\mathbb{E}\left[g\right]\right)\right]$.
Since the model is conservative, note that $\mathbb{P}_{\rho}$ the
law conditioned on a mass density $\rho:=\rho\left(\eta\right)=\frac{1}{\left|V\right|}\sum_{x}\eta_{x}$
(defined for finite graph), is also a reversible measure. 

We define the random walk $\left(X\left(t\right)\right)_{t\geqslant0}$
on the vertices whose jump rates are dependent of the environment
$\left(\eta\left(t\right)\right)_{t\geqslant0}$ which evolve according
to the Ginzburg-Landau process. For all time $t$, the random walk
$X\left(t\right)\in S$ and the jump rate from its position to one
of its neighbor is given by $V_{x}''\left(\eta_{x}\left(t\right)\right)_{t\geqslant0}$.
The law describing the joint evolution of the walker and the environment
is noted $\mathbb{P}_{x}^{\sigma}\left(\cdot\right)=\mathbb{P}\left(.|X\left(0\right)=x;\eta\left(0\right)=\sigma\right)$.
We write $\mathcal{L}_{p}^{\eta}$ the generator of this random walk
is therefore given by:
\[
\mathcal{L}_{p}^{\eta}f\left(x\right)=\sum_{z\sim x}V''\left(\eta_{x}\right)\left(f\left(z\right)-f\left(x\right)\right)\quad\forall\left(x,\eta\right)\in S\times\Omega
\]
The subscript $p$ has been chosen to signify that the generator acts
on the position. Finally, we define the generator $L=Id\otimes\mathcal{L}_{e}+\mathcal{L}_{p}\otimes Id$
describing the joint evolution of the walker and the environment id
acting on the functions $F:S\times\Omega\rightarrow\mathbb{R}$ in
the following way: 
\begin{equation}
LF\left(x,\eta\right)=\left(Id\otimes\mathcal{L}_{e}\right)F\left(x,\eta\right)+\left(\mathcal{L}_{p}\otimes Id\right)F\left(x,\eta\right),\forall\left(x,\eta\right)\in S\times\Omega\label{eq:GLGenerateurHS}
\end{equation}
To simplify the notations, we will write $\mathcal{L}_{e}$ instead
of $Id\otimes\mathcal{L}_{e}$ and $\mathcal{L}_{p}$ instead of $\mathcal{L}_{p}\otimes Id$
so that we can write $L=\mathcal{L}_{e}+\mathcal{L}_{p}$. 

The main theorem of this article is the following:
\begin{thm}
\label{thm:GLThmPrincipal} Let $G=\left(V,E\right)$ be an infinite
graph with bounded degree. Under assumptions $\ref{hyp:GLIndependance}$,
$\ref{hyp:GLHypotheseC2}$ and $\ref{hyp:GLHypotheseConvexite}$,
$\forall x,y\in V$:
\begin{align*}
\mathbb{C}ov\left(\eta_{x};P_{t}\eta_{y}\right) & \leqslant\frac{1}{C_{-}}\mathbb{E}\left[\mathbb{P}_{x}^{\eta}\left(X\left(t\right)=y\right)\right]\\
\mathbb{C}ov\left(\eta_{x};P_{t}\eta_{y}\right) & \geqslant\frac{1}{C_{+}}\mathbb{E}\left[\mathbb{P}_{x}^{\eta}\left(X\left(t\right)=y\right)\right]
\end{align*}
\end{thm}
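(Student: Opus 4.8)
The plan is to combine a \emph{site} version of the Helffer--Sj\"ostrand representation with the product structure of the Gibbs measure $\mu$. The starting point is the commutation of the site derivative $\partial_{x}$ with $\mathcal{L}_{e}$: since $H(\eta)=\sum_{z}V_{z}(\eta_{z})$ one has $\partial_{x}\partial_{z}H=\delta_{x,z}V_{x}''(\eta_{x})$, and differentiating the expression of $\mathcal{L}_{e}$ in $(\ref{eq:GLGenerateurdx})$ gives, for a suitable test function $f$ and every $x\in V$,
\[
\partial_{x}\mathcal{L}_{e}f=\mathcal{L}_{e}\partial_{x}f+V_{x}''(\eta_{x})\sum_{z\sim x}\left(\partial_{x}f-\partial_{z}f\right)=L\left(\nabla f\right)(x),
\]
where $\nabla f=(\partial_{x}f)_{x\in V}$ is regarded as a function on $V\times\Omega$ and the second term is exactly $\mathcal{L}_{p}$ acting on $\nabla f$. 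Hence $\nabla$ intertwines the two semigroups, $\nabla P_{t}=P_{t}^{L}\nabla$, and since $\nabla\eta_{y}$ is the (environment--independent) indicator $x\mapsto\mathbf{1}_{x=y}$ on $V$, applying $P_{t}^{L}$ and recognizing the joint semigroup yields the representation
\[
\partial_{x}P_{t}\eta_{y}=\mathbb{P}_{x}^{\eta}\left(X(t)=y\right)\geqslant 0.
\]
Making this rigorous on an infinite graph is, I expect, the main obstacle: one must construct the environment and the joint process, make sense of $P_{t}$ acting on $\eta_{y}$ (a function of finite norm but not local), and justify the intertwining by approximation with finite subgraphs. Here the bounded degree and Assumption~\ref{hyp:GLHypotheseConvexite} are essential -- they give a volume--independent spectral gap and the bound $\sum_{x}\mathbb{P}_{x}^{\eta}(X(t)=y)<\infty$, uniform on compact time intervals, obtained by dominating the number of jumps of $X$ up to time $t$ by a Poisson variable and using $|\{x:d_{G}(x,y)=k\}|\leqslant d^{k}$.

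Granting the representation, the remainder is a one--dimensional computation. Since $H$ has no interaction term, $\mu$ is a product measure, $\mu=\prod_{x\in V}\mu_{x}$ with $d\mu_{x}=e^{-V_{x}(\eta_{x})}d\eta_{x}$, so conditioning on the coordinates other than $x$ gives
\[
\mathbb{C}ov\left(\eta_{x};P_{t}\eta_{y}\right)=\mathbb{E}\left[\mathbb{C}ov_{\mu_{x}}\left(\eta_{x};P_{t}\eta_{y}\right)\right],
\]
the inner covariance being taken in the single variable $\eta_{x}$. By Assumption~\ref{hyp:GLIndependance}, $\mathbb{E}_{\mu_{x}}[\eta_{x}]=M$ and the primitive $\psi_{x}(r):=\int_{-\infty}^{r}(s-M)e^{-V_{x}(s)}\,ds$ vanishes at $\pm\infty$, so an integration by parts in $\eta_{x}$ yields
\[
\mathbb{C}ov_{\mu_{x}}\left(\eta_{x};F\right)=\mathbb{E}_{\mu_{x}}\left[\partial_{x}F\cdot w_{x}(\eta_{x})\right],\qquad w_{x}(r):=e^{V_{x}(r)}\int_{r}^{\infty}(s-M)e^{-V_{x}(s)}\,ds .
\]

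The next step is to bound the scalar weight $w_{x}$. One checks directly that $w_{x}>0$ everywhere, that $w_{x}$ solves $-w_{x}''+V_{x}'w_{x}'+V_{x}''w_{x}=1$, and, applying l'H\^opital to the formula for $w_{x}$ together with the bounds $\liminf_{|r|\to\infty}V_{x}'(r)/r\geqslant C_{-}$ and $\limsup_{|r|\to\infty}V_{x}'(r)/r\leqslant C_{+}$ that follow from Assumption~\ref{hyp:GLHypotheseConvexite}, that $\limsup_{|r|\to\infty}w_{x}(r)\leqslant 1/C_{-}$ and $\liminf_{|r|\to\infty}w_{x}(r)\geqslant 1/C_{+}$. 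Consequently any violation of $1/C_{+}\leqslant w_{x}\leqslant 1/C_{-}$ would force an interior extremum of $w_{x}$, where the equation and Assumption~\ref{hyp:GLHypotheseConvexite} give a contradiction by the maximum principle; hence $1/C_{+}\leqslant w_{x}\leqslant 1/C_{-}$.

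To conclude, insert $F=P_{t}\eta_{y}$ in the one--site formula: since $\partial_{x}P_{t}\eta_{y}=\mathbb{P}_{x}^{\eta}(X(t)=y)\geqslant 0$ and $1/C_{+}\leqslant w_{x}\leqslant 1/C_{-}$, for every fixed configuration of the coordinates outside $x$ one obtains
\[
\frac{1}{C_{+}}\,\mathbb{E}_{\mu_{x}}\!\left[\mathbb{P}_{x}^{\eta}(X(t)=y)\right]\leqslant\mathbb{C}ov_{\mu_{x}}\left(\eta_{x};P_{t}\eta_{y}\right)\leqslant\frac{1}{C_{-}}\,\mathbb{E}_{\mu_{x}}\!\left[\mathbb{P}_{x}^{\eta}(X(t)=y)\right],
\]
and averaging over the remaining coordinates under $\mu$ produces the two announced inequalities, since $\mathbb{E}[\partial_{x}P_{t}\eta_{y}]=\mathbb{E}[\mathbb{P}_{x}^{\eta}(X(t)=y)]$.
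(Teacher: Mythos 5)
Your proposal is correct, and its second half takes a genuinely different route from the paper. You share with the paper the site Helffer--Sj\"ostrand intertwining $\partial_{x}\mathcal{L}_{e}=L\partial_{x}$ (the paper's Lemma \ref{lem:GLEntrelas}), from which you extract the pointwise identity $\partial_{x}P_{t}\eta_{y}=\mathbb{P}_{x}^{\eta}\left(X(t)=y\right)\geqslant0$. But where the paper then proves the exact identity $\mathbb{C}ov\left(V_{x}'\left(\eta_{x}\right);P_{t}\eta_{y}\right)=\mathbb{E}\left[\mathbb{P}_{x}^{\eta}\left(X(t)=y\right)\right]$ (Lemma \ref{lem:GLEgaliteMarcheAleatoire}, via integration by parts and matching time derivatives) and combines it with the FKG-type bound $\mathbb{C}ov\left(f;P_{t}g\right)\geqslant0$ for increasing $f,g$ (Lemma \ref{lem:GLComparaison}, which requires Preston's criterion \emph{and} preservation of the partial order by the dynamics, proved by the coupling/Gronwall argument of Lemma \ref{lem:GLConservationDeLOrdre}), applied to the increasing functions $\frac{1}{C_{-}}V'\left(\eta_{x}\right)-\eta_{x}$ and $\eta_{x}-\frac{1}{C_{+}}V'\left(\eta_{x}\right)$, you instead condition on the coordinates off $x$, use the one-dimensional weighted covariance identity with weight $w_{x}$ solving the one-site Witten equation $-w_{x}''+V_{x}'w_{x}'+V_{x}''w_{x}=1$, and bound $1/C_{+}\leqslant w_{x}\leqslant1/C_{-}$ by a maximum-principle (Brascamp--Lieb type) argument; the theorem's two inequalities then fall out in one stroke. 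Your computations check out: the conditional decomposition of the covariance is valid because $\mu$ is a product measure, the ODE and the limits $r\to\pm\infty$ of $w_{x}$ are as you state under Assumption \ref{hyp:GLHypotheseConvexite}, and the boundary terms in the one-site integration by parts vanish thanks to the sub-Gaussian tails. What your route buys is that it bypasses entirely the FKG machinery and, in particular, the delicate order-preservation Lemma \ref{lem:GLConservationDeLOrdre}: the only positivity you need is that of $\partial_{x}P_{t}\eta_{y}$, which is automatic from its interpretation as a transition probability. What the paper's route buys is the exact equality of Lemma \ref{lem:GLEgaliteMarcheAleatoire} (of independent interest, and what yields equality in the Gaussian case) and an FKG inequality valid for general increasing functions, which is reused in Corollary \ref{cor:CoroThmPrinc}; note also that both routes use the product structure of $\mu$ essentially (the paper through Preston's criterion, you through the conditional one-site formula), and both leave the infinite-volume construction and the justification of the intertwining on non-local functions such as $\eta_{y}$ at the same level of rigor, which you correctly flag as the main technical point.
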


\begin{rem}
If we have space and time bounds on the coefficients $\mathbb{P}_{x}^{\eta}\left(X\left(t\right)=y\right)$,
then, these bounds apply immediately to the Ginzburg-Landau model.
Note that bounds can be (and some have been obtained in \cite{giacomin2001equilibrium})
on $\mathbb{Z}^{d}$ following \cite{carlen1987upper,mourrat2011variance}.
Furthermore, if $C_{-}=C_{+}$, so that the potential is Gaussian,
we obtain the equality instead of inequality above, and the walk is
a simple random walk. 
\begin{rem}
The uniform bounded degree assumption on the graph may not be necessary
as well as the assumptions 1 and 3. Indeed, they guarantee the well
definition of the model but they can be extended. 
\end{rem}

The main result has the following corollary:
\end{rem}

\begin{cor}
\label{cor:CoroThmPrinc}Under the assumptions $\ref{hyp:GLIndependance}$,
$\ref{hyp:GLHypotheseC2}$ and $\ref{hyp:GLHypotheseConvexite}$,
then for all function $f\colon\Omega\to\mathbb{R}$ such that 
\begin{equation}
\left\Vert f\right\Vert =\sum_{x\in S}\left\Vert \partial_{x}f\right\Vert _{\infty}<+\infty\label{eq:GLcorollaireThPConditionf}
\end{equation}
the following inequality holds:
\[
\mathbb{C}ov\left(f;P_{t}f\right)\leqslant\frac{1}{C_{-}}\left\Vert f\right\Vert ^{2}\sup_{x\in S}\mathbb{E}\left[\mathbb{P}_{x}^{\eta}\left(X_{t}=x\right)\right]
\]
Furthermore, if $f$ and $g$ are two increasing functions then, 
\[
\mathbb{C}ov\left(f;P_{t}g\right)\leqslant\frac{\left\Vert f\right\Vert \times\left\Vert g\right\Vert }{C_{-}}\sup_{\substack{x\in supp\left(f\right)\\
y\in supp\left(g\right)
}
}\mathbb{E}\left[\mathbb{P}_{x}^{\eta}\left(X_{t}=y\right)\right]
\]
\end{cor}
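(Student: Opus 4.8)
The plan is to deduce both inequalities from Theorem~\ref{thm:GLThmPrincipal} by bilinearity of the covariance, the bridge being the Helffer--Sj\"ostrand representation of Section~3 whose specialisation to coordinate functions underlies that theorem. For an admissible pair $f,g$ the representation has the shape
\[
\mathbb{C}ov\left(f;P_{t}g\right)=\mathbb{E}\left[\sum_{x,y\in V}\partial_{x}f\left(\eta\right)\,k_{t}\left(x,y;\eta\right)\,\partial_{y}g\left(\eta\right)\right],
\]
where the kernel $k_{t}$ is \emph{non-negative}: by construction it is an $\mathbb{E}_{x}^{\eta}$-average attached to the event $\left\{X_{t}=y\right\}$ carrying a weight lying in $\left[1/C_{+},1/C_{-}\right]$ by Assumption~\ref{hyp:GLHypotheseConvexite}, which is exactly why its $\mu$-average satisfies $\mathbb{C}ov\left(\eta_{x};P_{t}\eta_{y}\right)=\mathbb{E}\left[k_{t}\left(x,y;\eta\right)\right]$ and obeys the bounds of Theorem~\ref{thm:GLThmPrincipal} (here I used $\partial_{z}\eta_{x}=\mathds{1}_{\left\{z=x\right\}}$). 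Under $\left\Vert f\right\Vert ,\left\Vert g\right\Vert <+\infty$ all the sums below are absolutely convergent, since the Brascamp--Lieb inequality, a consequence of Assumption~\ref{hyp:GLHypotheseConvexite}, gives $\mathbb{C}ov\left(\eta_{x};\eta_{x}\right)\leqslant1/C_{-}$; and non-negativity of $k_{t}$ yields at once the basic estimate
\[
\mathbb{C}ov\left(f;P_{t}g\right)\leqslant\sum_{x,y\in V}\left\Vert \partial_{x}f\right\Vert _{\infty}\left\Vert \partial_{y}g\right\Vert _{\infty}\,\mathbb{C}ov\left(\eta_{x};P_{t}\eta_{y}\right).
\]

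When $f$ and $g$ are increasing the basic estimate already finishes the second inequality: then $\partial_{z}f,\partial_{z}g\geqslant0$ everywhere and $\partial_{z}f=0$ for $z\notin supp\left(f\right)$, so only the pairs $x\in supp\left(f\right),\,y\in supp\left(g\right)$ survive in the sum; inserting the upper bound $\mathbb{C}ov\left(\eta_{x};P_{t}\eta_{y}\right)\leqslant\frac{1}{C_{-}}\mathbb{E}\left[\mathbb{P}_{x}^{\eta}\left(X_{t}=y\right)\right]$ of Theorem~\ref{thm:GLThmPrincipal}, bounding each surviving term by the supremum over $supp\left(f\right)\times supp\left(g\right)$, and using $\sum_{x}\left\Vert \partial_{x}f\right\Vert _{\infty}=\left\Vert f\right\Vert$ gives the claimed bound $\frac{\left\Vert f\right\Vert \left\Vert g\right\Vert }{C_{-}}\sup\mathbb{E}\left[\mathbb{P}_{x}^{\eta}\left(X_{t}=y\right)\right]$.

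For the first inequality I take $g=f$. Now $\partial_{x}f$ may change sign, so a crude use of the basic estimate would only produce the off-diagonal supremum $\sup_{x,y}\mathbb{E}\left[\mathbb{P}_{x}^{\eta}\left(X_{t}=y\right)\right]$; to land on the diagonal one must exploit that $\left(h_{1},h_{2}\right)\mapsto\mathbb{C}ov\left(h_{1};P_{t}h_{2}\right)$ is a symmetric positive semi-definite bilinear form, since $\mathbb{C}ov\left(h;P_{t}h\right)=\left\Vert P_{t/2}\left(h-\mathbb{E}h\right)\right\Vert _{L^{2}\left(\mu\right)}^{2}\geqslant0$. Cauchy--Schwarz for this form gives $\mathbb{C}ov\left(\eta_{x};P_{t}\eta_{y}\right)\leqslant\mathbb{C}ov\left(\eta_{x};P_{t}\eta_{x}\right)^{1/2}\mathbb{C}ov\left(\eta_{y};P_{t}\eta_{y}\right)^{1/2}$, and substituting this into the basic estimate turns its right-hand side into the perfect square $\bigl(\sum_{x}\left\Vert \partial_{x}f\right\Vert _{\infty}\mathbb{C}ov\left(\eta_{x};P_{t}\eta_{x}\right)^{1/2}\bigr)^{2}$. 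Applying Theorem~\ref{thm:GLThmPrincipal} on the diagonal, $\mathbb{C}ov\left(\eta_{x};P_{t}\eta_{x}\right)\leqslant\frac{1}{C_{-}}\mathbb{E}\left[\mathbb{P}_{x}^{\eta}\left(X_{t}=x\right)\right]$, and bounding each term by $\sup_{x}\mathbb{E}\left[\mathbb{P}_{x}^{\eta}\left(X_{t}=x\right)\right]$, this square is at most $\frac{1}{C_{-}}\bigl(\sum_{x}\left\Vert \partial_{x}f\right\Vert _{\infty}\bigr)^{2}\sup_{x}\mathbb{E}\left[\mathbb{P}_{x}^{\eta}\left(X_{t}=x\right)\right]=\frac{\left\Vert f\right\Vert ^{2}}{C_{-}}\sup_{x}\mathbb{E}\left[\mathbb{P}_{x}^{\eta}\left(X_{t}=x\right)\right]$, which is the asserted inequality.

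The step I expect to be the genuine obstacle is this diagonal reduction: without the monotonicity hypothesis one cannot estimate the double sum term by term and still keep the random-walk probabilities on the diagonal, and it is the positive semi-definiteness of $\mathbb{C}ov\left(\cdot;P_{t}\cdot\right)$ --- combined with the fact that $\mathbb{C}ov\left(\eta_{x};P_{t}\eta_{y}\right)$ is itself the $\mu$-average of the non-negative kernel --- that rescues the diagonal bound. The remaining points are routine: checking that the Section~3 kernel is indeed non-negative (clear from its probabilistic construction) and that the double sums converge absolutely, which is precisely what $\left\Vert f\right\Vert <+\infty$ and the variance bound $\mathbb{C}ov\left(\eta_{x};\eta_{x}\right)\leqslant1/C_{-}$ guarantee.
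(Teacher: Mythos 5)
Your reduction steps (Cauchy--Schwarz for the positive semi-definite form $\left(h_{1},h_{2}\right)\mapsto\mathbb{C}ov\left(h_{1};P_{t}h_{2}\right)$ to land on the diagonal, then the supremum over the supports and $\sum_{x}\left\Vert \partial_{x}f\right\Vert _{\infty}=\left\Vert f\right\Vert$) are exactly the paper's endgame. The genuine gap is the starting point: the asserted representation $\mathbb{C}ov\left(f;P_{t}g\right)=\mathbb{E}\bigl[\sum_{x,y}\partial_{x}f\left(\eta\right)k_{t}\left(x,y;\eta\right)\partial_{y}g\left(\eta\right)\bigr]$ with a pointwise non-negative kernel acting on the gradients \emph{at the initial configuration} is nowhere established in the paper and does not follow from Section~3. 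What the Helffer--Sj\"ostrand intertwining (Lemma~\ref{lem:GLEntrelas}) actually gives is $\partial_{x}P_{t}g\left(\eta\right)=P_{t}^{L}G\left(x,\eta\right)=\mathbb{E}_{x}^{\eta}\left[\partial_{X_{t}}g\left(\eta_{t}\right)\right]$, i.e.\ the gradient of $g$ is evaluated along the \emph{evolved} environment $\eta_{t}$, so there is no multiplication-type kernel sandwiched between $\partial f\left(\eta\right)$ and $\partial g\left(\eta\right)$; and the identity $\mathbb{C}ov\left(V_{x}'\left(\eta_{x}\right);P_{t}\eta_{y}\right)=\mathbb{E}\left[\mathbb{P}_{x}^{\eta}\left(X_{t}=y\right)\right]$ of Lemma~\ref{lem:GLEgaliteMarcheAleatoire} closes only because $\partial_{z}V_{x}'\left(\eta_{x}\right)=V_{x}''\left(\eta_{x}\right)\mathds{1}_{z=x}$ matches the jump rates of the walk --- it does not extend to arbitrary $f$. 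Note also that if your kernel representation were available it would prove the second inequality for \emph{all} $f,g$ of finite norm, making the monotonicity hypothesis in the statement superfluous; this is a strong hint that such a ``basic estimate'' is not a freebie of the representation.

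The ingredient you are missing is the one the paper actually uses to obtain your ``basic estimate'': the FKG-type Lemma~\ref{lem:GLComparaison} combined with the Lipschitz bound $\left\Vert \partial_{x}f\right\Vert _{\infty}=L_{f}\left(x\right)$. Since $g_{\pm}\left(\eta\right)=\sum_{x}L_{f}\left(x\right)\eta_{x}\pm f\left(\eta\right)$ are increasing, $\mathbb{C}ov\left(P_{t}g_{-};g_{+}\right)\geqslant0$ yields, after cancellation of the cross terms by reversibility, $\mathbb{C}ov\left(f;P_{t}f\right)\leqslant\sum_{x,y}L_{f}\left(x\right)L_{f}\left(y\right)\mathbb{C}ov\left(\eta_{x};P_{t}\eta_{y}\right)$, which is the diagonal-case estimate you assumed; similarly, for increasing $f$ and $g$ one compares each of them with the linear functions $\sum_{x}L_{f}\left(x\right)\eta_{x}$ and $\sum_{y}L_{g}\left(y\right)\eta_{y}$ to get the off-diagonal estimate. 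With that substitution your argument becomes the paper's proof; without it, the first display of your proposal is an unproved (and, as stated, incorrect) claim rather than a consequence of the Helffer--Sj\"ostrand representation.
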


The next proposition, taken from \cite{bodineauGraham}, compares
the spectral gap of the Ginzburg-Landau process with the spectral
gap of the random walk. The proof is given for the sake of completeness.
\begin{prop}
\label{prop:TheSpectralGap}The spectral gap $\lambda_{e}^{*}$ of
the Ginzburg-Landau Process given by the generator $\mathcal{L}_{e}$
is lower bounded by the spectral gap $\lambda_{L}^{*}$ of the random
walk given by the generator $L$:
\[
\lambda_{e}^{*}\geqslant\lambda_{L}^{*}
\]
\end{prop}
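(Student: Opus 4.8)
The plan is to use the Helffer–Sjöstrand representation (which the paper announces it will establish in Section 3) to rewrite $\mathbb{C}ov(\eta_x; P_t \eta_y)$ in terms of the joint process on $S \times \Omega$, and then to run the standard variational argument that compares Dirichlet forms. Since the theorem $\ref{thm:GLThmPrincipal}$ and the corollary are stated just before this proposition, I would feel free to invoke the representation: there should be an identity of the shape $\mathbb{C}ov(f; P_t g) = \sum_x \mathbb{E}_\mu[\partial_x f \cdot (Q_t \partial g)_x]$ where $Q_t = e^{-tL}$ is the semigroup of the joint generator $L = \mathcal{L}_e + \mathcal{L}_p$ acting on functions $F\colon S \times \Omega \to \mathbb{R}$, the inner product being taken against $\mathbb{P} = \mu$ on $\Omega$ together with counting measure on $S$. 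Concretely, writing $u(x,\eta) = \partial_x f(\eta)$ one has $\frac{d}{dt}\mathbb{C}ov(f; P_t f) = -\langle u, L Q_t u\rangle$, so that the exponential decay rate of the left side is governed by the spectral gap of $L$.

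The key steps, in order, are: (1) Recall that $\lambda_e^*$ is the spectral gap of $\mathcal{L}_e$ acting on $L^2(\mu)$, i.e. the largest constant with $\langle f, \mathcal{L}_e f\rangle_\mu \geqslant \lambda_e^* \,\mathrm{Var}_\mu(f)$ for all suitable $f$. (2) Apply the commutation/representation identity from Section 3: for a local $f$ with finite $\|f\|$, the vector field $(\partial_b f)_b$ satisfies $\mathcal{L}_e \partial_b f = (\partial_b \mathcal{L}_e f) + (\text{Witten correction})$, and this correction is exactly encoded by the walker generator $\mathcal{L}_p^\eta$, so that $\partial \mathcal{L}_e f = L \,(\partial f)$ in the appropriate sense where $\partial f = (x \mapsto \partial_x f)$. (3) Use this to write the Dirichlet form of $\mathcal{L}_e$ as $\langle f, \mathcal{L}_e f\rangle_\mu = \langle \partial f, L \,\partial f\rangle$ — this is the content that makes $\mathcal{L}_e$ on functions "the same operator" as $L$ on gradients. (4) Since $L$ has spectral gap $\lambda_L^*$ (as an operator on the joint space, in the relevant orthogonal complement), bound $\langle \partial f, L \,\partial f\rangle \geqslant \lambda_L^* \langle \partial f, \partial f\rangle$. (5) Finally, identify $\langle \partial f, \partial f\rangle = \sum_x \mathbb{E}_\mu[(\partial_x f)^2]$ and use the Brascamp–Lieb / Helffer–Sjöstrand lower bound $\mathrm{Var}_\mu(f) \leqslant \langle \partial f, \partial f\rangle$ — actually one needs it in the direction $\langle \partial f, \partial f \rangle \geqslant \mathrm{Var}_\mu(f)$ up to the convexity constants, or more cleanly one runs the whole spectral computation simultaneously so that the eigenfunction of $\mathcal{L}_e$ realizing $\lambda_e^*$ produces, via $\partial$, a test function for $L$, giving $\lambda_L^* \leqslant \lambda_e^*$ directly.

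Assembling: take $f$ nearly realizing the spectral gap of $\mathcal{L}_e$, so $\langle f, \mathcal{L}_e f\rangle_\mu \approx \lambda_e^* \mathrm{Var}_\mu(f)$. Then $\langle \partial f, L\,\partial f\rangle = \langle f, \mathcal{L}_e f\rangle_\mu = \lambda_e^* \mathrm{Var}_\mu(f)$. On the other hand the HS representation also gives $\mathrm{Var}_\mu(f) = \int_0^\infty \langle \partial f, Q_t \,\partial f\rangle \,dt$-type control, or more simply $\langle \partial f, \partial f \rangle \geqslant \mathrm{Var}_\mu(f)$ is automatic from the representation $\mathrm{Var}_\mu(f) = \langle \partial f, L^{-1}\mathcal{L}_e \cdots\rangle$; combining, $\langle \partial f, L \,\partial f\rangle \geqslant \lambda_L^* \langle \partial f, \partial f\rangle \geqslant \lambda_L^* \mathrm{Var}_\mu(f)$, whence $\lambda_e^* \mathrm{Var}_\mu(f) \geqslant \lambda_L^* \mathrm{Var}_\mu(f)$ and $\lambda_e^* \geqslant \lambda_L^*$.

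**Main obstacle.** The delicate point is step (2)–(3): making the commutation identity $\partial \circ \mathcal{L}_e = L \circ \partial$ precise on an infinite graph, including domain issues and the fact that $L$ acts on the larger space $S \times \Omega$ while $\mathcal{L}_e$ acts on $\Omega$ — one must check that $\partial$ is (up to a constant depending on $C_\pm$) an isometry onto its image intertwining the two operators and their spectral gaps, and that the variational characterizations line up on the correct orthogonal complements (constants for $\mathcal{L}_e$; the kernel of $L$ for the joint process). Since the proposition is quoted from \cite{bodineauGraham} and stated "for completeness," I expect the cleanest writeup to simply cite the Section 3 representation, reduce to the eigenvalue comparison via the intertwining, and note that strict convexity (Assumption $\ref{hyp:GLHypotheseConvexite}$) guarantees both operators have a genuine gap so the comparison is not vacuous.
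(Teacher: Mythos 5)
Your overall strategy (intertwining of $\partial_x$ with $\mathcal{L}_{e}$ plus the spectral decay of $L$) is the right family of ideas, and it is indeed what the paper uses, but the specific static identity your argument pivots on is false, and this is a genuine gap. You claim in steps (3)--(5) that $\langle f,\mathcal{L}_{e}f\rangle_{\mu}=\langle\partial f,L\,\partial f\rangle$ and then conclude $\lambda_{e}^{*}\mathrm{Var}_{\mu}(f)=\langle\partial f,L\,\partial f\rangle\geqslant\lambda_{L}^{*}\langle\partial f,\partial f\rangle\geqslant\lambda_{L}^{*}\mathrm{Var}_{\mu}(f)$. But by the integration-by-parts formula $\eqref{eq:GLIPP}$ the left-hand side is the edge Dirichlet form $\sum_{b\in\overrightarrow{B}}\mathbb{E}\left[\left(\partial_{b}f\right)^{2}\right]$, whereas with $u(x,\eta)=\partial_{x}f(\eta)$ one has
\begin{equation*}
\langle u,Lu\rangle=\sum_{x}\sum_{b}\mathbb{E}\left[\left(\partial_{b}\partial_{x}f\right)^{2}\right]+\Bigl(\text{a }V''\text{-weighted edge form }\approx\sum_{b}\mathbb{E}\left[V''\left(\partial_{b}f\right)^{2}\right]\Bigr),
\end{equation*}
which contains second derivatives of $f$ that cannot be bounded by the Dirichlet form of $f$, and carries the weights $C_{\pm}$. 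So the pivotal equality collapses, and even after repairing it with inequalities the direction you need (an upper bound on $\langle\partial f,L\,\partial f\rangle$ by $\langle f,\mathcal{L}_{e}f\rangle$) fails. In addition, $\langle\partial f,\partial f\rangle\geqslant\mathrm{Var}_{\mu}(f)$ is only a Brascamp--Lieb bound up to the constant $C_{-}$, so at best your chain would give $\lambda_{e}^{*}\geqslant c\left(C_{-},C_{+}\right)\lambda_{L}^{*}$, not the clean inequality stated in the proposition.

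The paper's proof avoids all of this by staying dynamic rather than static: it writes $\mathrm{Var}\left(f\right)=\int_{0}^{\infty}\left(-\partial_{s}\mathrm{Var}\left(P_{s}f\right)\right)ds$, identifies $-\partial_{s}\mathrm{Var}\left(P_{s}f\right)$ with the edge Dirichlet form $\mathbb{E}\left[\sum_{x\sim y}\left(\left(\partial_{x}-\partial_{y}\right)P_{s}f\right)^{2}\right]$, transfers the semigroup onto the gradient via Lemma $\ref{lem:GLEntrelas}$ (so $\left(\partial_{x}-\partial_{y}\right)P_{s}f=P_{s}^{L}\partial_{x}f-P_{s}^{L}\partial_{y}f$), applies the $L^{2}$ decay $\Vert P_{s}^{L}\cdot\Vert_{2}^{2}\leqslant e^{-\lambda_{L}^{*}s}\Vert\cdot\Vert_{2}^{2}$, and integrates in $s$ to obtain the Poincar\'e inequality $\mathrm{Var}\left(f\right)\leqslant\left(\lambda_{L}^{*}\right)^{-1}\left(-\partial_{t=0}\mathrm{Var}\left(P_{t}f\right)\right)$, i.e.\ $\lambda_{e}^{*}\geqslant\lambda_{L}^{*}$ with no Brascamp--Lieb input and no comparison between the two Dirichlet forms. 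Your plan actually gestures at this route (the remark about the decay rate of $\mathbb{C}ov\left(f;P_{t}f\right)$ being governed by the gap of $L$), and if you carry that version out carefully you recover the paper's argument; the static Dirichlet-form comparison as written does not work.
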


The spectral gap has been derived especially in \cite{caputo2004spectral,chafai2002glauber,LandimPanizoYau}.
In particular, in $\left(\mathbb{Z}/N\mathbb{Z}\right)^{d}$, one
can easily obtain a spectral gap of order $C/N^{2}$ with a constant
$C$ that depends on $C_{-}$ and $C_{+}$ using the Helffer-Sjöstrand
representation.

\section{\label{sec:GLHSParSite}The Representation of Helffer-Sjöstrand by
Site}

in this section, we prove start by proving the main theorem using
two key lemmas which will be proved later.

\subsection{Proof of the Theorem $\ref{thm:GLThmPrincipal}$}

The two following key lemmas are inspired by \cite{bodineauGraham}:
\begin{lem}
\label{lem:GLEgaliteMarcheAleatoire}For all $t\geqslant0$ and all
$x,y\in S$, we have the following equality:
\[
\mathbb{C}ov\left(V_{x}'\left(\eta_{x}\right);P_{t}\eta_{y}\right)=\mathbb{E}\left[\mathbb{P}_{x}^{\eta}\left(X\left(t\right)=y\right)\right]
\]
\end{lem}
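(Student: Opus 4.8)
The identity to be established is
\[
\mathbb{C}ov\left(V_{x}'\left(\eta_{x}\right);P_{t}\eta_{y}\right)=\mathbb{E}\left[\mathbb{P}_{x}^{\eta}\left(X\left(t\right)=y\right)\right],
\]
and the natural route is the Helffer--Sj\"ostrand machinery announced in the introduction. First I would recall the integration-by-parts (Witten/Helffer--Sj\"ostrand) formula for the Gibbs measure $\mu$: for reasonable $f,g$,
\[
\mathbb{C}ov\left(f;g\right)=\sum_{b\in\overrightarrow{B}}\mathbb{E}\left[\partial_{b}f\cdot\left(\mathcal{L}_{e}^{-1}\,\text{``}\partial_{b}g\text{''}\right)\right],
\]
more precisely one writes $\mathbb{C}ov(f;g)=\int_{0}^{\infty}\mathbb{E}\left[\sum_{b}\partial_{b}f\cdot\partial_{b}(P_{s}g)\right]ds$ by differentiating $s\mapsto\mathbb{E}[f\cdot P_{s}g]$ and using reversibility; this is the standard starting point and I would present it as a short lemma or cite it from the references. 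Applying this with $f=V_{x}'(\eta_{x})$ and $g=\eta_{y}$, and noting $\partial_{b}V_{x}'(\eta_{x})=V_{x}''(\eta_{x})\,sgn_{x}(b)$, reduces the left-hand side to an integral over $s$ of an expectation involving $V_{x}''(\eta_{x})$, the sign function at $x$, and the gradient $\partial_{b}(P_{s}\eta_{y})$.

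The heart of the argument is the commutation relation between $\mathcal{L}_{e}$ and the gradients $\partial_{b}$: differentiating the evolution equation one finds that $\left(\partial_{b}P_{s}\eta_{y}\right)_{b}$ solves a closed linear system governed by the Witten--Laplacian, which is exactly the generator $L=\mathcal{L}_{e}+\mathcal{L}_{p}$ acting on functions of $(x,\eta)$ — here the environment-dependent jump rates $V_{x}''(\eta_{x})$ of the walk $X$ appear precisely because $\partial_{b}\partial_{b'}H=\partial_{b}\partial_{b'}\sum_{z}V_{z}$ only couples edges sharing a vertex, with weight $V_{z}''(\eta_{z})$. Concretely I would show that $u_{s}(x):=\sum_{y'\text{ adjacent via }b}(\cdots)$ — better, that the function $(x,\eta)\mapsto\big(\text{suitable contraction of }\partial_{b}P_{s}\eta_{y}\big)$ coincides with $\mathbb{E}_{x}^{\eta}$-type quantities — satisfies the Kolmogorov equation for $L$ with initial datum $\mathbf{1}_{x=y}$, hence equals $\mathbb{P}_{x}^{\eta}(X(s)=y)$ after taking the $\mathbb{E}$ over the environment started from $\mu$. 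Plugging this back collapses the $s$-integral against the generator and yields the stated equality. The cleanest bookkeeping is probably to introduce $\psi_{s}(x):=\mathbb{E}\big[\text{(the gradient quantity)}\big]$ and check directly that $\partial_{s}\psi_{s}=-L^{*}\psi_{s}$ (or the forward equation) with $\psi_{0}=\delta_{y}$, using stationarity of $\mu$ under $P_{s}$ to move $\mathcal{L}_{e}$ around and the explicit form of $\mathcal{L}_{p}^{\eta}$ for the spatial part.

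The main obstacle, and the place where the paper's ``site approach'' presumably does its work, is making the gradient system genuinely closed and identifying it with the walk generator on a general bounded-degree graph: the edge-indexed vector $(\partial_{b}P_{s}\eta_{y})_{b}$ lives on oriented edges, whereas the walk lives on vertices, so one must pass from an edge formulation to a vertex formulation without losing positivity or introducing boundary terms — and one must justify all the differentiations under the expectation, the convergence of the $s$-integral at $0$ and $\infty$, and the manipulations on an infinite graph (finite-volume approximation plus the uniform convexity bound $C_{-}\le V_{x}''\le C_{+}$ from Assumption~\ref{hyp:GLHypotheseConvexite} to control everything). I expect the rigorous part to be: (i) derive the gradient dynamics and recognize $L$; (ii) solve it probabilistically to get $\mathbb{P}_{x}^{\eta}(X(s)=y)$; (iii) integrate in $s$ and match with the covariance; with step (i) being the conceptual crux and step (iii) the technical one. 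Since Lemma~\ref{lem:GLEgaliteMarcheAleatoire} is stated as a ``key lemma'' to be proved later, I would defer the heaviest estimates to that later section and here only lay out the identity-level computation.
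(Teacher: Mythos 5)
Your overall strategy is the right one — intertwine the gradient with $\mathcal{L}_{e}$ so that $\partial\,P_{t}g$ is expressed through the semigroup $P_{t}^{L}$ of the walker-plus-environment generator, choose $f=V_{x}'\left(\eta_{x}\right)$ so that only the site $x$ contributes after integration by parts, and use that $\partial_{z}\eta_{y}=\mathds{1}_{z=y}$ gives the initial datum $\delta_{y}$ — but the step you yourself call the ``conceptual crux'' is exactly the one you never carry out, and as sketched it would not go through. You work with the edge gradients $\left(\partial_{b}P_{s}\eta_{y}\right)_{b\in B}$ and assert that they ``solve a closed linear system governed by the Witten--Laplacian, which is exactly $L=\mathcal{L}_{e}+\mathcal{L}_{p}$''; this is false in general: the edge formulation closes into a walk on oriented edges only on $\mathbb{Z}$ or the torus (this is precisely the obstruction in Bodineau--Graham, discussed in the paper's appendix), and your proposed ``suitable contraction'' from edges to vertices is left unspecified. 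The paper's resolution is to commute $\mathcal{L}_{e}$ with the \emph{site} derivative $\partial_{x}$: since $H=\sum_{z}V_{z}\left(\eta_{z}\right)$, one has $\partial_{x}\partial_{b}H=V_{x}''\left(\eta_{x}\right)sgn_{x}\left(b\right)$, so the Hessian term assembles exactly into the vertex walk generator with rates $V_{x}''\left(\eta_{x}\right)$, giving $\partial_{x}\mathcal{L}_{e}g=\left(\mathcal{L}_{e}+\mathcal{L}_{p}\right)\partial_{x}g$ and hence $\partial_{x}P_{t}g\left(\eta\right)=P_{t}^{L}G\left(x,\eta\right)$ with $G\left(z,\eta\right)=\partial_{z}g\left(\eta\right)$. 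Without this identity (or your unconstructed edge-to-vertex contraction) the proof has no content, so this is a genuine gap rather than a deferred technicality.

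A secondary issue is your reduction of the covariance to $\int_{0}^{\infty}\mathbb{E}\bigl[\sum_{b}\partial_{b}f\cdot\partial_{b}\left(P_{s}g\right)\bigr]ds$: applied to $\mathbb{C}ov\left(f;P_{t}g\right)$ this becomes an integral over $s\in\left[t,\infty\right)$ and requires $\mathbb{E}\left[f\,P_{s}g\right]\to\mathbb{E}\left[f\right]\mathbb{E}\left[g\right]$ (equivalently $\mathbb{E}\left[\mathbb{P}_{x}^{\eta}\left(X\left(s\right)=y\right)\right]\to0$) as $s\to\infty$, an ergodicity/decay statement on an infinite graph that is not free and is not needed. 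The paper avoids it entirely: using the integration by parts $\eqref{eq:GLIPP}$ with $f=V_{x}'\left(\eta_{x}\right)$, the intertwining, and $\mathbb{E}\left[\mathcal{L}_{e}h\right]=0$, one gets $\partial_{t}\mathbb{E}\left[f\,P_{t}g\right]=\mathbb{E}\left[\mathcal{L}_{p}P_{t}^{L}G\left(x,\eta\right)\right]=\mathbb{E}\left[LP_{t}^{L}G\left(x,\eta\right)\right]=\partial_{t}\mathbb{E}\left[P_{t}^{L}G\left(x,\eta\right)\right]$ for every $t$, and the two sides agree at $t=0$, so no behaviour at infinity is ever invoked. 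I recommend restructuring your argument along these lines: prove the site commutation lemma first, then run the derivative-matching computation.
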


\begin{lem}
\label{lem:GLComparaison}For all increasing functions $f$ and $g$,
we have the:
\[
\mathbb{C}ov\left(f;P_{t}g\right)\geqslant0
\]
\end{lem}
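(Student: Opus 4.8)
The plan is to establish the standard FKG-type property that the Ginzburg–Landau semigroup preserves positivity of covariance between increasing functions, using the strict convexity of the potentials (Assumption \ref{hyp:GLHypotheseConvexite}) together with the diffusive nature of $\mathcal{L}_e$. The key point is that the generator $\mathcal{L}_e$ is a conservative diffusion whose drift is a discrete gradient of the convex Hamiltonian $H$, so it satisfies a Brascamp–Lieb / monotonicity-preserving structure. First I would verify that the semigroup $P_t$ preserves the cone of increasing functions: if $g$ is increasing, then $P_t g$ is increasing for all $t\geqslant 0$. This is the heart of the argument. One way is to differentiate in time: set $u(t,\eta)=P_t g(\eta)$, which solves $\partial_t u = -\mathcal{L}_e u$, and show that each $\partial_b u(t,\cdot)$ (for $b\in\overrightarrow B$, and more precisely each $\partial_x u$) stays nonnegative if it starts nonnegative. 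Commuting $\partial_x$ past $\mathcal{L}_e$ produces the Witten-type operator on the gradient — precisely the object that, by the Helffer–Sjöstrand philosophy recalled in the introduction, is a positivity-preserving (sub-Markovian) evolution because the off-diagonal terms $-\partial_b\partial_b$ give a nonnegative-off-diagonal generator and the potential contributes a nonnegative diagonal $V_x''\geqslant C_->0$. Hence the semigroup generated by this conjugated operator is positivity preserving, so $\partial_x P_t g\geqslant 0$ whenever $\partial_x g\geqslant 0$, i.e. $P_t g$ is increasing.

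Once monotonicity preservation is in hand, the conclusion follows from the classical FKG inequality for the reversible Gibbs measure $\mu$. Since the single-site measures $\exp(-V_x(t))\,dt$ are log-concave by Assumption \ref{hyp:GLHypotheseConvexite} and the product measure $d\mu(\eta)=\prod_x \exp(-V_x(\eta_x))\,d\eta$ has no interaction between sites in this representation, $\mu$ satisfies the FKG (Harris) inequality: for increasing functions $f_1,f_2$ one has $\mathbb{C}ov(f_1;f_2)\geqslant 0$. Applying this with $f_1=f$ and $f_2=P_t g$, both increasing, gives
\[
\mathbb{C}ov\left(f;P_t g\right)\geqslant 0,
\]
which is the claim.

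The main obstacle is the rigorous justification of the positivity-preservation step on an infinite graph: commuting $\partial_x$ through $\mathcal{L}_e$, identifying the resulting generator acting on the gradient field $(\partial_x u)_x$, and arguing it is a (sub-)Markovian semigroup requires care about domains, the exchange of limits (finite-volume approximations $V_n\uparrow V$), and the finiteness conditions $\|g\|=\sum_x\|\partial_x g\|_\infty<\infty$ that control the gradient field uniformly. I would handle this by first proving everything on finite subgraphs $G_n$, where $\mathcal{L}_e$ is an honest elliptic diffusion generator on $\mathbb{R}^{V_n}$ and the comparison principle for parabolic PDEs gives monotonicity preservation directly, then passing to the limit using the uniform bounds from $\|g\|<\infty$ and the convergence of the finite-volume semigroups to $P_t$. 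A secondary, more benign point is to note that conditioning on the mass density $\rho$ (which also yields a reversible measure, as remarked in the text) is compatible with the FKG structure, though for the stated lemma the unconditioned measure $\mathbb{P}$ suffices.
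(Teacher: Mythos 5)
Your proposal is correct in outline and reaches the lemma by the same two-stage structure as the paper (monotonicity preservation of $P_t$ plus the Harris--FKG inequality for the product measure $\mu$), but the way you establish the monotonicity-preservation step is genuinely different. The paper proves it probabilistically: it couples two solutions of the SDE system \eqref{eq:GLEDS} driven by the same Brownian motions, started from ordered initial data, and controls the weighted sum of negative parts $\Phi(t)=\sum_x (2d)^{-|x|}\phi_x^2(t)\mathds{1}_{\phi_x(t)<0}$ by a Gronwall argument (Lemma \ref{lem:GLConservationDeLOrdre}); this is stated for a wider class of Hamiltonians with pair terms $V_{x,y}(\eta_x+\eta_y)$ under explicit sign conditions on the constants, and it does not require $g$ to be differentiable. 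You instead exploit the intertwining $\partial_x P_t g = P_t^L\,\partial_x g$ (which is exactly Lemma \ref{lem:GLEntrelas}, already proved in the paper for Lemma \ref{lem:GLEgaliteMarcheAleatoire}) together with the fact that $P_t^L$ is the semigroup of a genuine Markov process --- the jump rates $V_x''(\eta_x)\in[C_-,C_+]$ are positive --- hence positivity preserving, so $\partial_x g\geqslant 0$ for all $x$ implies $\partial_x P_t g\geqslant 0$ for all $x$. For the product potential $H=\sum_x V_x(\eta_x)$ this is a shorter and cleaner argument that recycles the Helffer--Sj\"ostrand machinery already present, and it suffices for the functions actually used in the proof of Theorem \ref{thm:GLThmPrincipal}; its limitation is that it needs the commuted operator to be a Markov generator (i.e.\ the Hessian of $H$ to have the right sign structure) and needs differentiable increasing functions, whereas the paper's coupling lemma is more robust and covers interacting potentials where your positivity argument would not directly apply. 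Your concluding FKG step is the same as the paper's (the paper checks the Holley--Preston criterion, which is an equality for a product measure; your direct appeal to Harris for the product measure is equivalent), and the technical caveats you flag (infinite volume, domains, finite-volume approximation, the bound $\|g\|<\infty$) are the right ones and are no worse than those implicitly accepted in the paper's own use of Lemma \ref{lem:GLEntrelas}.
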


From these two key lemmas, the proof of the main theorem is immediate:
\begin{proof}[Proof of the theorem $\ref{thm:GLThmPrincipal}$]
 Consider two vertices $x$ and $y$ of $G$ and the functions $f:\Omega\rightarrow\mathbb{R}$
and $g:\Omega\rightarrow\mathbb{R}$ defined by:
\begin{eqnarray*}
f\left(\eta\right) & = & \frac{1}{C_{-}}V'\left(\eta_{x}\right)-\eta_{x}\\
g\left(\eta\right) & = & \eta_{y}
\end{eqnarray*}
Since $f$ and $g$ are increasing, using lemma $\ref{lem:GLComparaison}$,
we obtain:
\[
\mathbb{C}ov\left(f;P_{t}g\right)\geqslant0\Rightarrow\mathbb{C}ov\left(\eta_{x};P_{t}\eta_{y}\right)\leqslant\frac{1}{C_{-}}\mathbb{C}ov\left(V'\left(\eta_{x}\right);P_{t}\eta_{y}\right)
\]
Then, using lemma $\ref{lem:GLEgaliteMarcheAleatoire}$ which concludes
the proof of the first part of the theorem. Using the same reasoning
with $f:\eta\mapsto\eta_{x}-\frac{1}{C_{+}}V'\left(\eta_{x}\right)$,
we obtain the second inequality which concludes the proof of $\ref{thm:GLThmPrincipal}$.
\end{proof}

\subsection{Proof of Lemma $\ref{lem:GLEgaliteMarcheAleatoire}$}

In this section, we introduce the Helffer-Sjöstrand representation,
a probabilistic interpretation of the intertwining technique. We recall
briefly that the intertwining technique is the commutation of a generator
and an operator. Here, we will commute $\mathcal{L}_{e}$ and $\partial_{x}$
so we get a second generator satisfying (informally) the relation
$\partial_{x}\mathcal{L}_{e}=L\partial_{x}$. Recall that $L$ is
the infinitesimal generator describing the joint evolution of the
random walk and the environment. This relation is the key to interpret
the evolution of the mass with the evolution of the random walk. 

The Helffer-Sjöstrand representation is the following:
\begin{lem}
\label{lem:GLEntrelas}Define $G\left(x,\eta\right)=\partial_{x}g\left(\eta\right)$
of some function $g$ We have the following inequality:
\[
\partial_{x}\mathcal{L}_{e}g\left(\eta\right)=LG\left(x,\eta\right)
\]
where $L$ is defined by$\eqref{eq:GLGenerateurHS}$. Consequently:
\[
\partial_{x}P_{t}^{\mathcal{L}_{e}}g\left(\eta\right)=P_{t}^{L}G\left(x,\eta\right)
\]
\end{lem}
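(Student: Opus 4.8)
The plan is to verify the commutation relation $\partial_x \mathcal{L}_e g = L(\partial_x g)$ by direct computation, differentiating the third form of the generator in \eqref{eq:GLGenerateurdx}, namely $\mathcal{L}_e g(\eta) = -\sum_{b\in\overrightarrow{B}} \partial_b\partial_b g(\eta) + \partial_b H(\eta)\,\partial_b g(\eta)$. Applying $\partial_x$ and using that $\partial_x$ commutes with every $\partial_b$ (these are all partial derivatives in the flat space $\mathbb{R}^V$), the only term producing something new is the Leibniz term coming from $\partial_x\bigl(\partial_b H(\eta)\bigr) = \partial_b\partial_x H(\eta) = \sum_z \partial_b V_z''(\eta_z)\,[z=x] = sgn_x(b)\,V_x''(\eta_x)$ wait — more carefully, $\partial_b H = \sum_z \partial_b V_z(\eta_z)$ and $\partial_x\partial_b H(\eta)$ picks out the $V_x''$ contribution with the appropriate sign $sgn_x(b)$. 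So I would write
\[
\partial_x \mathcal{L}_e g(\eta) = -\sum_{b\in\overrightarrow{B}} \partial_b\partial_b(\partial_x g)(\eta) + \partial_b H(\eta)\,\partial_b(\partial_x g)(\eta) \;+\; \sum_{b\in\overrightarrow{B}} sgn_x(b)\,V_x''(\eta_x)\,\partial_b g(\eta).
\]
The first sum is exactly $\mathcal{L}_e(\partial_x g)(\eta) = (Id\otimes\mathcal{L}_e)G(x,\eta)$, so the task reduces to identifying the extra sum with $(\mathcal{L}_p\otimes Id)G(x,\eta)$.

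For that identification, I would unpack $\partial_b g = \partial_y g - \partial_z g$ for $b=(y,z)$ and reorganize the sum $\sum_{b\in\overrightarrow{B}} sgn_x(b) V_x''(\eta_x)\,\partial_b g$ as a sum over neighbors of $x$. Only edges incident to $x$ contribute (for other edges $sgn_x(b)=0$): if $b=(x,w)$ then $sgn_x(b)=-1$ and $\partial_b g = \partial_x g - \partial_w g$, contributing $-V_x''(\eta_x)(\partial_x g - \partial_w g) = V_x''(\eta_x)(\partial_w g - \partial_x g)$; if instead $b=(w,x)$ then $sgn_x(b)=+1$ and $\partial_b g = \partial_w g - \partial_x g$, contributing $+V_x''(\eta_x)(\partial_w g - \partial_x g)$. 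Either way, summing over all neighbors $w\sim x$ yields $\sum_{w\sim x} V_x''(\eta_x)\bigl(\partial_w g(\eta) - \partial_x g(\eta)\bigr) = \sum_{w\sim x} V_x''(\eta_x)\bigl(G(w,\eta) - G(x,\eta)\bigr)$, which is precisely $\mathcal{L}_p^\eta G(\cdot,\eta)(x) = (\mathcal{L}_p\otimes Id)G(x,\eta)$ by the definition of $\mathcal{L}_p^\eta$. Combining the two pieces gives $\partial_x\mathcal{L}_e g = (Id\otimes\mathcal{L}_e + \mathcal{L}_p\otimes Id)G = LG$, as claimed.

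The semigroup statement $\partial_x P_t^{\mathcal{L}_e} g = P_t^L G(x,\cdot)$ then follows from the intertwining at the generator level by a standard argument: fix $t$ and consider $\Phi(s) = P_s^L\bigl(\partial_\bullet P_{t-s}^{\mathcal{L}_e} g\bigr)(x,\eta)$ for $s\in[0,t]$; differentiating in $s$ and using $\frac{d}{ds}P_{t-s}^{\mathcal{L}_e} = \mathcal{L}_e P_{t-s}^{\mathcal{L}_e}$ together with the commutation relation just proved makes $\Phi'(s)=0$, so $\Phi(0)=\Phi(t)$, which is exactly the asserted identity. I would remark that making this rigorous requires knowing that $P_{t-s}^{\mathcal{L}_e} g$ stays in the class of functions for which the intertwining holds (finite norm $\|\cdot\|$, twice differentiable) — this is where Assumption~\ref{hyp:GLHypotheseConvexite} (uniform convexity, hence uniform ellipticity/contractivity estimates on the Witten Laplacian) does the work. \textbf{The main obstacle} is therefore not the algebraic commutation — that is a short Leibniz-rule computation — but the functional-analytic justification that both semigroups are well-defined on an infinite graph and that differentiation under $P_t$ is legitimate; I would handle this by restricting a priori to nice local $g$, invoking the uniform bounds from strict convexity to propagate the required regularity, and, if needed for the infinite-volume case, passing to the limit from finite subgraphs.
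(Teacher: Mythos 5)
Your overall strategy is exactly the paper's: apply $\partial_{x}$ to the third form of $\mathcal{L}_{e}$ in \eqref{eq:GLGenerateurdx}, use the Leibniz rule, identify the Hessian term $\sum_{b}\partial_{x}\partial_{b}H\,\partial_{b}g$ with the walk generator acting on $G\left(\cdot,\eta\right)$, and then pass to semigroups; your interpolation $\Phi\left(s\right)=P_{s}^{L}\bigl(\partial_{\bullet}P_{t-s}^{\mathcal{L}_{e}}g\bigr)$ is a more careful version of the paper's one-line ``the consequence is due to $P_{t}^{L}=\exp\left(-Lt\right)$'', and your remarks on regularity are sensible.

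The one concrete problem is a sign in the key identification. With the paper's conventions ($\partial_{b}=\partial_{y}-\partial_{z}$ and $sgn_{x}\left(\left(y,z\right)\right)=-1$ if $y=x$, $+1$ if $z=x$), one has $\partial_{x}\partial_{b}H=V_{x}''\left(\eta_{x}\right)\bigl(\mathds{1}_{y=x}-\mathds{1}_{z=x}\bigr)=-sgn_{x}\left(b\right)V_{x}''\left(\eta_{x}\right)$, not $+sgn_{x}\left(b\right)V_{x}''\left(\eta_{x}\right)$ as you use. Redoing your case analysis, the edge $b=\left(x,w\right)$ contributes $+V_{x}''\left(\eta_{x}\right)\left(\partial_{x}g-\partial_{w}g\right)$ and $b=\left(w,x\right)$ contributes the same, so the extra term is $\sum_{w\sim x}V_{x}''\left(\eta_{x}\right)\bigl(G\left(x,\eta\right)-G\left(w,\eta\right)\bigr)$, the negative of what you obtained, and this is precisely what the paper's computation produces. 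The sign matters: it is this positive (Dirichlet-form) version of the jump operator that makes $P_{t}^{L}=e^{-Lt}$ the Markov semigroup of the joint walk/environment process, which is how the lemma is used in the proof of Lemma \ref{lem:GLEgaliteMarcheAleatoire} through $P_{t}^{L}G\left(x,\eta\right)=\mathbb{P}_{x}^{\eta}\left(X\left(t\right)=y\right)$; with your sign, $e^{-tL}$ would contain the exponential of $+t$ times a positive operator in the position variable and loses that probabilistic meaning. To be fair, the displayed formula for $\mathcal{L}_{p}^{\eta}$ in the paper is written with the probabilist's sign $f\left(z\right)-f\left(x\right)$ and is itself inconsistent with the convention $P_{t}^{L}=e^{-Lt}$ used everywhere else, so your slip reproduces that typo rather than the identity the argument actually needs; once the sign of $\partial_{x}\partial_{b}H$ is corrected, your proof coincides with the paper's.
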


\begin{proof}
By definition:
\begin{eqnarray*}
\partial_{x}\mathcal{L}_{e}g\left(\eta\right) & = & -\sum_{b}\partial_{b}\partial_{b}\partial_{x}g\left(\eta\right)+\sum_{b}\partial_{b}H\left(\eta\right)\times\partial_{b}\partial_{x}g\left(\eta\right)\\
 &  & +\sum_{b}\partial_{b}\partial_{x}H\left(\eta\right)\times\partial_{b}g\left(\eta\right)\\
 & = & \mathcal{L}_{e}\partial_{x}g\left(\eta\right)+\sum_{y\sim x}V''\left(\eta_{x}\right)\times\left(\partial_{x}g\left(\eta\right)-\partial_{y}g\left(\eta\right)\right)\\
 & = & \left(\mathcal{L}_{e}+\mathcal{L}_{p}\right)\partial_{x}g\left(\eta\right)=LG\left(x,\eta\right)
\end{eqnarray*}
The consequence is due to the fact $P_{t}^{L}=\exp\left(-Lt\right)$.
\end{proof}
We begin the proof of the lemma by recalling the formula of integration
by part. :

\begin{equation}
\mathbb{E}\left[f\times\mathcal{L}_{e}g\right]=\mathbb{E}\left[\sum_{x,y:x\sim y}\partial_{x}f\times\left(\partial_{x}-\partial_{y}\right)g\right]\label{eq:GLIPP}
\end{equation}
Applying this formula with the functions $f\colon\eta\to V_{x}'\left(\eta_{x}\right)$
and $g\colon\eta\to\eta_{y}-\mathbb{E}\left[\eta_{y}\right]$, defining
the function $G:\left(x,\eta\right)\to\partial_{x}g\left(\eta\right)$
and noting that for all $h:\Omega\mapsto\mathbb{R}$ we have $\mathbb{E}\left[\mathcal{L}_{e}h\right]=0$,
we obtain: 
\begin{align*}
\partial_{t}\mathbb{E}\left[f\times P_{t}g\right] & =\mathbb{E}\left[\sum_{y:x\sim y}V_{x}''\left(\eta_{x}\right)\times\left(\partial_{x}-\partial_{y}\right)P_{t}g\right]\\
 & =\mathbb{E}\left[\sum_{y:x\sim y}V_{x}''\left(\eta_{x}\right)\times\left(P_{t}^{L}G\left(x,\eta\right)-P_{t}^{L}G\left(y,\eta\right)\right)\right]\\
 & =\mathbb{E}\left[\mathcal{L}_{p}P_{t}^{L}G\left(x,\eta\right)\right]\\
 & =\mathbb{E}\left[LP_{t}^{L}G\left(x,\eta\right)\right]=\partial_{t}\mathbb{E}\left[P_{t}^{L}G\left(x,\eta\right)\right]
\end{align*}
Which immediately implies that $\mathbb{E}\left[f\times P_{t}g\right]=\mathbb{E}\left[P_{t}^{L}G\left(x,\eta\right)\right]$.
By the definition of $G$, we have for all $z$ and $\eta$ that $G\left(z,\eta\right)=\mathds{1}_{y=z}$,
therefore $P_{t}^{L}G\left(x,\eta\right)=\mathbb{P}_{x}^{\eta}\left(X\left(t\right)=y\right)$
which concludes the proof of the lemma.  \begin{flushright} $\square$ \end{flushright}

\subsection{Proof of Lemma $\ref{lem:GLComparaison}$}

In this section, we prove Lemma $\ref{lem:GLComparaison}$. Given
the independence assumption on the potential ($H\left(\eta\right)=\sum V_{x}\left(\eta_{x}\right)$),
the Harris FKG-inequality would be enough if one can prove that the
partial order is preserved with time. Indeed, in this case, if $g$
is an increasing function, then $P_{t}g$ is an increasing function
and therefore the lemma is proved. This lemma is already proven in
\cite[Lemma 4.8]{bodineauGraham}, so we give here a improved version
on a more general class of potential for the time preservation of
the partial order following the same ideas.

In the next lemma, we consider that the potential there exists two
families of functions of $C^{2}\left(\mathbb{R},\mathbb{R}\right)$
$\left(V_{x}\right)_{x\in S}$ and $\left(V_{x,y}\right)_{x,y\in V,y\sim x}$
such that $H\left(\eta\right)=\sum_{x,y\sim x}V_{x}\left(\eta_{x}\right)+V_{x,y}\left(\eta_{x}+\eta_{y}\right)$.
\begin{lem}
\label{lem:GLConservationDeLOrdre}Assume that there exists four constants
$C_{1,-},C_{2,-},C_{1,+}$ and $C_{2,+}$ such that:
\begin{eqnarray*}
C_{1,-}\leqslant V''_{x}\leqslant C_{1,+}\\
C_{2,-}\leqslant V''_{x,y}\leqslant C_{2,+}
\end{eqnarray*}
Then the partial order is time preserved if:
\begin{eqnarray*}
\inf_{x}\inf_{y\sim x}\left[\sum_{z\sim y}C_{2,-}\right]-C_{2,+}+C_{1,-} & \geqslant & 0\\
C_{2,-} & \geqslant & 0
\end{eqnarray*}
\end{lem}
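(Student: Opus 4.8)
The plan is to show that the partial order is preserved by the dynamics by a coupling/comparison argument at the level of the SDEs, using the fact that strict convexity of $H$ makes the drift of the difference process point toward the nonnegative orthant. First I would set up the synchronous coupling: start two copies $\eta(t)$ and $\sigma(t)$ of the Ginzburg-Landau process driven by the \emph{same} family of Brownian motions $(B_b(t))_{b\in\overrightarrow B}$, with $\sigma(0)\geqslant\eta(0)$, and consider the difference $\xi_x(t)=\sigma_x(t)-\eta_x(t)$. Because the noise is common, the Brownian terms cancel and $\xi$ solves a (random) ODE:
\[
d\xi_x(t)=\sum_{b\in\overrightarrow B}sgn_x(b)\,\partial_b\bigl(H(\sigma(t))-H(\eta(t))\bigr)\,dt .
\]
The goal is to prove $\xi_x(t)\geqslant0$ for all $x$ and all $t\geqslant0$; then $P_tg(\sigma(0))\geqslant P_tg(\eta(0))$ for increasing $g$, $P_tg$ is increasing, and Lemma~\ref{lem:GLComparaison} follows from the Harris FKG inequality applied to the product measure $\mu$ exactly as indicated in the text.

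Next I would expand $\partial_b(H(\sigma)-H(\eta))$ using the decomposition $H(\eta)=\sum_{x,y\sim x}V_x(\eta_x)+V_{x,y}(\eta_x+\eta_y)$ and the fundamental theorem of calculus, writing each difference $V_x'(\sigma_x)-V_x'(\eta_x)=\bigl(\int_0^1 V_x''(\eta_x+s\xi_x)\,ds\bigr)\xi_x$ and similarly for the $V_{x,y}$ terms, with the bracketed integrals being \emph{bounded} coefficients lying in $[C_{1,-},C_{1,+}]$ resp.\ $[C_{2,-},C_{2,+}]$. This turns the difference equation into a linear time-inhomogeneous system $\dot\xi(t)=-A(t)\xi(t)$ where $A(t)$ is (minus) a weighted graph Laplacian-type operator whose off-diagonal entries are controlled by $C_{2,-},C_{2,+}$ and whose diagonal dominance is exactly what the two displayed hypotheses encode: the condition $C_{2,-}\geqslant0$ makes the relevant off-diagonal couplings have the right sign, and $\inf_x\inf_{y\sim x}\bigl[\sum_{z\sim y}C_{2,-}\bigr]-C_{2,+}+C_{1,-}\geqslant0$ is the quantitative diagonal-dominance bound guaranteeing that $-A(t)$ generates a positivity-preserving (sub-Markovian) evolution. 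The cleanest way to conclude positivity is then a Gr\"onwall/maximum-principle argument: define $m(t)=\inf_x \xi_x(t)$ (or a suitably regularized infimum to handle the infinite graph) and show that at a coordinate where the infimum is (nearly) attained the drift $\dot\xi_x(t)\geqslant -c\,m(t)^-$ for some constant $c$, so that $m(t)^-\equiv 0$ by Gr\"onwall since $m(0)\geqslant0$.

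The main obstacle I expect is the \emph{infinite graph}: neither $\inf_x\xi_x(t)$ need be attained nor is the linear ODE system finite-dimensional, so one cannot just differentiate the infimum. The standard fix, which I would carry out, is to first prove the statement on finite subgraphs $G_n\uparrow G$ (with, say, suitable boundary conditions), where the maximum principle is rigorous because the infimum over finitely many coordinates is attained, and then pass to the limit using the bounded-degree assumption and the uniform bounds $C_{1,\pm},C_{2,\pm}$ to get the needed a priori control on $\sup_x|\xi_x(t)|$ over compact time intervals (a Gr\"onwall estimate in a weighted $\ell^\infty$ norm with weights decaying in $|x|$). A secondary technical point is justifying the synchronous coupling and the stochastic-calculus manipulations (existence/uniqueness of strong solutions, the fact that the common-noise difference is pathwise differentiable) under Assumptions~\ref{hyp:GLIndependance}--\ref{hyp:GLHypotheseConvexite}; this is routine given strict convexity and $C^2$ regularity but should be stated. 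The comparison-of-constants bookkeeping — checking that the two displayed inequalities are precisely what diagonal dominance requires after the FTC expansion — is the part I would write out carefully but expect to be purely mechanical.
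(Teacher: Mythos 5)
Your proposal is correct and its core is the same as the paper's: a synchronous coupling (two solutions of $\eqref{eq:GLEDS}$ driven by the same family of Brownian motions, so the noise cancels in the difference), a fundamental-theorem-of-calculus expansion of the drift difference with coefficients pinned in $[C_{1,-},C_{1,+}]$ and $[C_{2,-},C_{2,+}]$, and a Gronwall argument. Where you genuinely differ is the final positivity step. The paper never passes to finite subgraphs: it introduces the weighted functional $\Phi\left(t\right)=\sum_{x\in V}\left(2d\right)^{-|x|}\phi_{x}^{2}\left(t\right)\mathds{1}_{\left\{ \phi_{x}\left(t\right)<0\right\} }$ with $\phi_{x}=\eta_{x}-\sigma_{x}$, proves $\frac{d}{dt}\Phi\leqslant C\,\Phi$ directly on the infinite system (well-posedness being borrowed from Shiga), and concludes $\Phi\equiv0$ since $\Phi\left(0\right)=0$; the exponentially decaying weights play the role of your exhaustion $G_{n}\uparrow G$ plus weighted $\ell^{\infty}$ a priori bound, in one stroke and with less machinery. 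Your route --- quasimonotone comparison/maximum principle on finite volumes followed by a limit --- is the classical alternative and would work, at the price of justifying the finite-to-infinite volume convergence, which the paper's argument avoids. One bookkeeping point to correct before writing it out: the first displayed hypothesis is not a diagonal-dominance bound but an off-diagonal sign (quasimonotonicity) condition. In $\dot{\xi}_{x}$ the coefficient of $\xi_{y}$ for $y\sim x$ is $V''_{y}+\sum_{z\sim y}V''_{y,z}-V''_{x,y}\geqslant C_{1,-}+\left[\sum_{z\sim y}C_{2,-}\right]-C_{2,+}$, the negative term coming from the $-\partial_{x}H$ part of the drift; the hypothesis makes this nonnegative, exactly as $C_{2,-}\geqslant0$ handles the distance-two coefficients $V''_{y,z}$, while the diagonal entries only enter through the Gronwall constant. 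Once the signs are assigned this way, your infimum-coordinate estimate closes as you describe.
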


\begin{proof}
We define two solutions $\left(\eta\left(t\right)\right)_{t\geqslant0}$
and $\left(\sigma\left(t\right)\right)_{t\geqslant0}$ of the system
of SDE $\eqref{eq:GLEDS}$ driven by the same family of Brownian motions.and
such that $\eta\left(0\right)\geqslant\sigma\left(0\right)$. We define
for all $x\in V$ the function $\phi_{x}:\mathbb{R}_{+}\rightarrow\mathbb{R}$,
defined by $\phi_{x}\left(t\right)=\eta_{x}\left(t\right)-\sigma_{x}\left(t\right)$.
Of course, this is a continuous function $\phi_{x}$. Then, we define
the function $\Phi:\mathbb{R}_{+}\rightarrow\mathbb{R}$ by $\Phi\left(t\right)=\sum_{x\in S}\left(2d\right)^{-|x|}\phi_{x}^{2}\left(t\right)\mathds{1}_{\left\{ \phi_{x}\left(t\right)<0\right\} }$
where $d$ is the uniform upper bound on the degree of the vertices
of $V$. Note that $\Phi$ is continuous, positive, $\Phi\left(0\right)=0$
and well defined, see \cite[Théorème 2.1]{shiga1980infinite}. Intuitively,
this function $\Phi$ measure the number of negative $\phi_{x}$.
Using Gronwall Lemma, we will show that the function $\Phi$ is equal
to the zero constant function.

To simplify the notations, we will omit to write the dependence on
$t$ of the processes $\eta\left(t\right)$ and $\sigma\left(t\right)$
when there are no ambiguity and rather write $\eta$ and $\sigma$.
In the same way, we will write $\eta_{x}$ and $\sigma_{x}$ instead
$\eta_{x}\left(t\right)$ and $\sigma_{x}\left(t\right)$. 

To establish the comparison lemma, we differentiate each summand of
$\Phi$ :
\begin{align*}
d\phi_{x}\left(t\right)= & d\left(\eta_{x}\left(t\right)-\sigma_{x}\left(t\right)\right)\\
= & \sum_{y\sim x}\left[\sum_{z\sim y}V_{y,z}^{'}\left(\eta_{y}+\eta_{z}\right)-V_{y,z}^{'}\left(\sigma_{y}+\sigma_{z}\right)\right]\\
 & \qquad-\left(V_{x,y}^{'}\left(\eta_{x}+\eta_{y}\right)-V_{x,y}^{'}\left(\sigma_{x}+\sigma_{y}\right)\right)\\
 & +\sum_{y\sim x}V_{y}^{'}\left(\eta_{y}\right)-V_{y}^{'}\left(\sigma_{y}\right)-\left(V_{x}^{'}\left(\eta_{x}\right)-V_{x}^{'}\left(\sigma_{x}\right)\right)
\end{align*}
Consider the case $\phi_{x}\left(t\right)<0$, meaning that $\eta_{x}\left(t\right)\leqslant\sigma_{x}\left(t\right)$.
Consequently, $d\phi_{x}\left(t\right)$ is an increasing function
in$\sigma_{x}$ so we can bound it from below by setting $\sigma_{x}=\eta_{x}$.
For each term, we measure its contribution in term of $\phi_{.}\left(t\right)$.
Observe that:
\begin{align*}
V'_{x,y}\left(\eta_{x}+\eta_{y}\right)-V'_{x,y}\left(\eta_{x}+\sigma_{y}\right)= & \int_{\sigma_{y}}^{\eta_{y}}V''_{x,y}\left(\eta_{x}+s\right)ds\\
V'_{y,z}\left(\eta_{y}+\eta_{z}\right)-V'_{y,z}\left(\sigma_{y}+\sigma_{z}\right)= & V'_{y,z}\left(\eta_{y}+\eta_{z}\right)-V'_{y,z}\left(\sigma_{y}+\eta_{z}\right)\\
 & \quad+V'_{y,z}\left(\sigma_{y}+\eta_{z}\right)-V'_{y,z}\left(\sigma_{y}+\sigma_{z}\right)\\
= & \int_{\sigma_{y}}^{\eta_{y}}V''_{y,z}\left(s+\eta_{z}\right)ds+\int_{\sigma_{z}}^{\eta_{z}}V''_{y,z}\left(\sigma_{y}+s\right)ds\\
V'_{y}\left(\eta_{y}\right)-V'_{y}\left(\sigma_{y}\right)= & \int_{\sigma_{y}}^{\eta_{y}}V''_{y}\left(s\right)ds
\end{align*}
Thus:
\begin{align*}
d\phi_{x}\left(t\right)\geqslant & \sum_{y\sim x}\int_{\sigma_{y}}^{\eta_{y}}\left[\sum_{z\sim y}V''_{y,z}\left(s+\eta_{z}\right)\right]-V''_{x,y}\left(\eta_{x}+s\right)+V''_{y}\left(s\right)ds\\
 & +\sum_{y\sim x}\sum_{z\sim y}\int_{\sigma_{z}}^{\eta_{z}}V''_{y,z}\left(\sigma_{y}+s\right)ds
\end{align*}
Under the assumption of the lemma, the signs of the integrands are
positive. Under the ellipticity assumptions on the potential, we get:
\begin{align*}
d\phi_{x}\left(t\right)\geqslant & \sum_{y\sim x}\phi_{y}\left(t\right)\mathds{1}{}_{\phi_{y}\left(t\right)<0}\left(\left[\sum_{z\sim y}C_{2,+}\right]-C_{2,-}+C_{1,+}\right)\\
 & +\sum_{y\sim x}\sum_{z\sim y}\phi_{z}\left(t\right)\mathds{1}{}_{\phi_{z}\left(t\right)<0}C_{2,+}
\end{align*}
Recall that $\Phi$ is given by: 
\[
\Phi\left(t\right)=\sum_{x\in V}2^{-|x|}\phi_{x}^{2}\left(t\right)\mathds{1}_{\left\{ \phi_{x}\left(t\right)<0\right\} }
\]
By differentiating and writing $C_{max}=\max_{x,y:y\sim x}\left[\sum_{z\sim y}C_{2,+}\right]-C_{2,-}+C_{1,+}$,
we obtain:
\begin{align*}
\frac{d}{dt}\Phi\left(t\right) & \leqslant\sum_{x\in V}2^{1-\left|x\right|}\phi_{x}\left(t\right)\times\mathds{1}_{\phi_{x}\left(t\right)<0}\\
 & \qquad\times C_{max}\left(\sum_{y\sim x}\phi_{y}\left(t\right)\mathds{1}{}_{\phi_{y}\left(t\right)<0}+\sum_{z\sim y}\phi_{z}\left(t\right)\mathds{1}{}_{\phi_{z}\left(t\right)<0}\right)
\end{align*}
Using the trivial inequality $\forall a,b\in\mathbb{R}$, $2ab\leqslant a^{2}+b^{2}$,
then there exists a constant $C:=C\left(C_{max},d\right)$ such that:
\begin{align*}
\frac{d}{dt}\Phi\left(t\right) & \leqslant C\times\sum_{x\in V}\left(2d\right)^{-\left|x\right|}\phi_{x}^{2}\left(t\right)\mathds{1}_{\phi_{x}\left(t\right)<0}\\
 & \leqslant C\times\Phi\left(t\right)
\end{align*}
Finally, using Gronwall Lemma, we obtain:
\[
\Phi\left(t\right)\leqslant\Phi\left(0\right)\times\exp\left(C\times t\right)
\]
Which concludes the proof of the lemma since $\Phi\left(0\right)=0$.
\end{proof}
We can now prove the second key Lemma.
\begin{proof}[Proof of Lemma $\ref{lem:GLComparaison}$]
 As written above, It is enough to show that the Harris-FKG inequality
holds. It relies on the sufficient condition of the theorem 3 of \cite{preston1974generalization}.
For any two configurations $\sigma$ and $\eta$, noting $\left(\eta\lor\sigma\right)_{x}=\max\left\{ \eta_{x},\sigma_{x}\right\} $
and $\left(\eta\land\sigma\right)_{x}=\min\left\{ \eta_{x},\sigma_{x}\right\} $,
if 
\begin{equation}
\mu\left(\eta\lor\sigma\right)\mu\left(\eta\land\sigma\right)\geqslant\mu\left(\sigma\right)\mu\left(\eta\right)\label{eq:GLCritere}
\end{equation}
then the Harris-FKG inequality holds. This criterion if immediately
verified since we are in the independent case, \emph{i.e.} 
\begin{eqnarray*}
V_{x}\left(\left(\eta\lor\sigma\right)_{x}\right)+V_{x}\left(\left(\eta\land\sigma\right)_{x}\right) & = & V_{x}\left(\eta_{x}\right)+V_{x}\left(\sigma_{x}\right),\quad\forall x\in V
\end{eqnarray*}
Thus, the inequality $\eqref{eq:GLCritere}$ is an equality in our
case, which concludes the proof of the lemma.
\end{proof}
\begin{rem}
\label{rem:GLCritereHolley}One can easily verified that the inequality
$\eqref{eq:GLCritere}$ is not true when $H$ can be written $H\left(\eta\right)=\sum_{x}V_{x}\left(\eta_{x}\right)+\sum_{y\sim x}V_{x,y}\left(\eta_{x}+\eta_{y}\right)$,
where $V_{x,y}$ is a strictly convex function. Moreover, in this
case, one can show that $\mathbb{E}\left[\eta_{x}\eta_{y}\right]<0$
for $x\sim y$ and therefore the Harris-FKG can't be established.
\end{rem}

\section{Proof of Auxiliary Results}

We start by proving corollary $\ref{cor:CoroThmPrinc}$:
\begin{proof}
Because of the condition $\left(\ref{eq:GLcorollaireThPConditionf}\right)$
on $f$, we have that $f$ is Lipschitz in all its coordinate. We
write $L_{f}\left(x\right)=\left\Vert \partial_{x}f\right\Vert _{\infty}$
to get:
\[
\left|f\left(\eta\right)-f\left(\eta^{x}\right)\right|\leqslant L_{f}\left(x\right)\left|\eta_{x}-\eta_{x}^{x}\right|
\]
where $\eta^{x}$ is the configuration which all coordinates are equal
to the ones of $\eta$ except (potentially) in $x$,\emph{ i.e}. $\forall y\neq x$
we have $\eta_{y}^{x}=\eta_{y}$. From this, we obtain for the functions
$g_{+}$ and $g_{-}$ defined by $g_{+}\left(\eta\right)=\sum_{x}L_{f}\left(x\right)\eta_{x}+f\left(\eta\right)$
and $g_{-}\left(\eta\right)=\sum_{x}L_{f}\left(x\right)\eta_{x}-f\left(\eta\right)$
are increasing functions and using lemma $\ref{lem:GLComparaison}$:
\[
\mathbb{C}ov\left(P_{t}g_{-};g_{+}\right)\geqslant0\Leftrightarrow\mathbb{C}ov\left(P_{t}\sum_{x}L_{f}\left(x\right)\eta_{x};\sum_{y}L_{f}\left(y\right)\eta_{y}\right)\geqslant\mathbb{C}ov\left(P_{t}f;f\right)
\]
which gives by the main theorem:
\begin{align*}
\mathbb{C}ov\left(f;P_{t}f\right) & \leqslant\sum_{x,y}L_{f}\left(x\right)L_{f}\left(y\right)\mathbb{C}ov\left(\eta_{x};P_{t}\eta_{y}\right)\\
 & \leqslant\sum_{x,y}L_{f}\left(x\right)L_{f}\left(y\right)\sup_{x}\mathbb{C}ov\left(\eta_{x};P_{t}\eta_{x}\right)
\end{align*}
Using Cauchy-Schwartz inequality and concavity of the square root
function, this proves the first part of the corollary. If $f$ and
$g$ are two increasing functions, by lemma $\ref{lem:GLComparaison}$:
\begin{eqnarray*}
\mathbb{C}ov\left(\sum_{x}L_{f}\left(x\right)\eta_{x};P_{t}\sum_{y}L_{g}\left(y\right)\eta_{y}\right) & \geqslant & \mathbb{C}ov\left(f;P_{t}\sum_{y}L_{g}\left(y\right)V_{y}'\left(\eta_{y}\right)\right)\\
 & \geqslant & \mathbb{C}ov\left(f;P_{t}g\right)
\end{eqnarray*}
Expanding the sum, and taking the supremum over the $x$ and $y$
such that $L_{f}\left(x\right)\neq0$ and $L_{g}\left(y\right)\neq0$
and the main theorem, concludes the corollary.
\end{proof}
We then prove proposition $\ref{prop:TheSpectralGap}$.
\begin{proof}
Recall that $\lambda_{L}^{*}$ is the spectral gap of the random walk
defined by: 
\[
\lambda_{L}^{*}=\inf_{f\neq0}\frac{-\partial_{t=0}\Vert P_{t}^{L}f\Vert_{2}^{2}}{\Vert f\Vert_{2}^{2}}
\]
where $\Vert f\Vert_{2}^{2}=\sum f^{2}\left(x\right)$. Therefore
$\Vert P_{t}^{L}f\Vert_{2}^{2}\leqslant\Vert f\Vert_{2}^{2}e^{-\lambda_{L}^{*}t}$,
for all function $f$. In this way,
\begin{align*}
\mathbb{V}ar\left(P_{t}^{L}f\right) & =\int_{\mathbb{R}^{+}}-\partial_{s}\mathbb{V}ar\left(P_{s}f\right)ds\\
 & =\int_{\mathbb{R}^{+}}\mathbb{E}\left[\sum_{x,y}\left(\left(\partial_{x}-\partial_{y}\right)P_{s}f\right)^{2}\right]ds\\
 & =\int_{\mathbb{R}^{+}}\mathbb{E}\left[\sum_{x,y}\left(P_{s}^{L}\left(\partial_{x}-\partial_{y}\right)f\right)^{2}\right]ds\\
 & \leqslant\int_{\mathbb{R}^{+}}e^{-\lambda_{L}^{*}s}\mathbb{E}\left[\sum_{x,y}\left(\left(\partial_{x}-\partial_{y}\right)f\right)^{2}\right]ds\\
 & \leqslant\left(\lambda_{L}^{*}\right)^{-1}\times\left(-\partial_{t}\mathbb{V}ar\left(P_{t}f\right)\right)
\end{align*}
Taking the infimum over the function $f$, one obtains:
\[
\lambda_{L}^{*}\leqslant\inf_{f}\frac{-\partial_{t=0}\mathbb{V}ar\left(P_{t}f\right)}{\mathbb{V}ar\left(f\right)}=\lambda_{e}^{*}
\]
Which ends the proof.
\end{proof}

\section*{Appendix A. Helffer-Sjöstrand Representation by edge}

In the appendix, we develop an extension of the original approach
of \cite{bodineauGraham}, where the authors commuted the generator
$\mathcal{L}_{e}$ and $\partial_{b}$ with $b\in\overrightarrow{B}$.
We give the approach, its difficulties and some associated results.
A discussion of this approach can be found in the Ph.D. thesis of
the author, see \cite{deBuyerThese} (in french). Commuting $\mathcal{L}_{e}$
and $\partial_{b}$, one gets: 
\begin{align}
\partial_{b'}\mathcal{L}_{e}f & =\partial_{b'}(\sum_{b\in\overrightarrow{B}}-\partial_{b}\partial_{b}f+\partial_{b}H\times\partial_{b}f)\nonumber \\
 & =\sum_{b\in\overrightarrow{B}}-\partial_{b}\partial_{b}\partial_{b'}f+\partial_{b}\partial_{b'}H\times\partial_{b}f+\partial_{b}H\times\partial_{b}\partial_{b'}f\nonumber \\
 & =\mathcal{L}_{e}\partial_{b'}f+\sum_{b\in\overrightarrow{B}}\partial_{b}\partial_{b'}H\times\partial_{b}f\label{eq:commutationAreteE0}
\end{align}
The authors argued that the second summand of $\eqref{eq:commutationAreteE0}$
can only be interpreted as the (positive) generator of a random walk
on the directed edges (of $\overrightarrow{B}$) on the torus (or
$\mathbb{Z}$) since the Hessian of the potential function would have
non negative off-diagonal term and positive diagonal term. However,
noting for every $b=\left(x,y\right)\in B$ the edge with a reversed
orientation $\overleftarrow{b}=\left(y,x\right)$, one can note that
$\partial_{b}=-\partial_{\overleftarrow{b}}$ so we get:
\begin{align}
\sum_{b\in\overrightarrow{B}}\partial_{b}\partial_{b'}H\times\partial_{b}f & =\sum_{\substack{b\in B\\
b\sim b'
}
}\partial_{b}\partial_{b'}H\left(\partial_{b}f-\partial_{b'}f\right)\nonumber \\
 & \qquad\qquad+(\partial_{b'}\partial_{b'}H-\sum_{\substack{b\in B\\
b\sim b'
}
}\partial_{b}\partial_{b'}H)\partial_{b}f\nonumber \\
 & =\mathcal{L}'_{p}\partial_{b'}f+(\partial_{b'}\partial_{b'}H-\sum_{\substack{b\in B\\
b\sim b'
}
}\partial_{b}\partial_{b'}H)\partial_{b'}f\label{eq:DefGenArete}
\end{align}
Where $b\sim b'$ means $b'\neq\overleftarrow{b}$ and$\partial_{b}\partial_{b'}H<0$.
On $\mathbb{Z}$, note that $(\partial_{b'}\partial_{b'}H-\sum_{\substack{b'\sim b\in B}
}\partial_{b}\partial_{b'}H)=0$. Furthermore one can see that $\mathcal{L}'_{p}$ is the generator
of a random walk on the set of oriented edges $B$. To continue the
discussion, we can restrict ourselves to the case where the graph
is $\mathbb{Z}^{2}$ and the Gaussian potential ``$H\left(\eta\right)=\frac{1}{2}\sum\eta_{x}^{2}$''.
In this case, the walker follows the law of a simple random walk on
the oriented ``kite'' graph, drawn with unoriented edges in figure
$\ref{fig:kite}$.

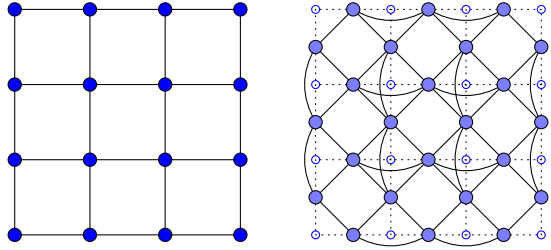
\begin{figure}[!h]
\begin{center}
\begin{tikzpicture}[line cap=round,line join=round,>=triangle 45,x=1.0cm,y=1.0cm] 
\clip(0.8,0.8) rectangle (8.2,4.2); 
\draw (1.,4.)-- (1.,3.); 
\draw (1.,3.)-- (1.,2.); 
\draw (1.,2.)-- (1.,1.); 
\draw (1.,1.)-- (2.,1.); 
\draw (2.,1.)-- (2.,2.); 
\draw (2.,2.)-- (1.,2.); 
\draw (2.,2.)-- (2.,3.); 
\draw (2.,3.)-- (1.,3.); 
\draw (2.,3.)-- (2.,4.); 
\draw (2.,4.)-- (1.,4.); 
\draw (2.,4.)-- (3.,4.); 
\draw (3.,4.)-- (3.,3.); 
\draw (3.,3.)-- (2.,3.); 
\draw (3.,3.)-- (3.,2.); 
\draw (3.,2.)-- (2.,2.); 
\draw (3.,2.)-- (3.,1.); 
\draw (3.,1.)-- (2.,1.); 
\draw (3.,1.)-- (4.,1.); 
\draw (4.,1.)-- (4.,2.); 
\draw (4.,2.)-- (3.,2.); 
\draw (4.,2.)-- (4.,3.); 
\draw (4.,3.)-- (3.,3.); 
\draw (4.,3.)-- (4.,4.); 
\draw (4.,4.)-- (3.,4.); 
\draw [dotted] (5.,4.)-- (5.,3.); 
\draw [dotted] (5.,3.)-- (5.,2.); 
\draw [dotted] (5.,2.)-- (5.,1.); 
\draw [dotted] (5.,1.)-- (6.,1.); 
\draw [dotted] (6.,1.)-- (6.,2.); 
\draw [dotted] (6.,2.)-- (5.,2.); 
\draw [dotted] (6.,2.)-- (6.,3.); 
\draw [dotted] (6.,3.)-- (5.,3.); 
\draw [dotted] (6.,3.)-- (6.,4.); 
\draw [dotted] (6.,4.)-- (5.,4.); 
\draw [dotted] (6.,4.)-- (7.,4.); 
\draw [dotted] (7.,4.)-- (7.,3.); 
\draw [dotted] (7.,3.)-- (6.,3.); 
\draw [dotted] (7.,3.)-- (7.,2.); 
\draw [dotted] (7.,2.)-- (6.,2.); 
\draw [dotted] (7.,2.)-- (7.,1.); 
\draw [dotted] (7.,1.)-- (6.,1.); 
\draw [dotted] (7.,1.)-- (8.,1.); 
\draw [dotted] (8.,1.)-- (8.,2.); 
\draw [dotted] (8.,2.)-- (7.,2.); 
\draw [dotted] (8.,2.)-- (8.,3.); 
\draw [dotted] (8.,3.)-- (7.,3.); 
\draw [dotted] (8.,3.)-- (8.,4.); 
\draw [dotted] (8.,4.)-- (7.,4.); 
\draw (5.5,4.)-- (5.,3.5); 
\draw (5.,3.5)-- (5.5,3.); 
\draw (5.5,3.)-- (6.,3.5); 
\draw (6.,3.5)-- (5.5,4.); 
\draw (6.,3.5)-- (6.5,4.); 
\draw (6.5,4.)-- (7.,3.5); 
\draw (7.,3.5)-- (6.5,3.); 
\draw (6.5,3.)-- (6.,3.5); 
\draw (6.5,3.)-- (6.,2.5); 
\draw (6.,2.5)-- (5.5,3.); 
\draw (5.5,3.)-- (5.,2.5); 
\draw (5.,2.5)-- (5.5,2.); 
\draw (5.5,2.)-- (6.,2.5); 
\draw (6.,2.5)-- (6.5,2.); 
\draw (6.5,2.)-- (6.,1.5); 
\draw (6.,1.5)-- (5.5,2.); 
\draw (5.5,2.)-- (5.,1.5); 
\draw (5.,1.5)-- (5.5,1.); 
\draw (5.5,1.)-- (6.,1.5); 
\draw (6.,1.5)-- (6.5,1.); 
\draw (6.5,1.)-- (7.,1.5); 
\draw (7.,1.5)-- (6.5,2.); 
\draw (6.5,2.)-- (7.,2.5); 
\draw (7.,2.5)-- (6.5,3.); 
\draw (7.,1.5)-- (7.5,1.); 
\draw (7.5,1.)-- (8.,1.5); 
\draw (8.,1.5)-- (7.5,2.); 
\draw (7.,1.5)-- (7.5,2.); 
\draw (7.5,2.)-- (7.,2.5); 
\draw (7.,2.5)-- (7.5,3.); 
\draw (7.5,3.)-- (8.,2.5); 
\draw (8.,2.5)-- (7.5,2.); 
\draw (7.5,3.)-- (7.,3.5); 
\draw (7.,3.5)-- (7.5,4.); 
\draw (7.5,4.)-- (8.,3.5); 
\draw (8.,3.5)-- (7.5,3.); 
\draw (5.5,1.) to [bend right] (6.5,1.); 
\draw (5.5,2.) to [bend right] (6.5,2.); 
\draw (6.,2.5) to [bend right] (6.,1.5); 
\draw (6.,3.5) to [bend right] (6.,2.5); 
\draw (5.5,3.) to [bend right] (6.5,3.); 
\draw (6.5,3.) to [bend right] (7.5,3.); 
\draw (7.,3.5) to [bend right] (7.,2.5); 
\draw (7.,2.5) to [bend right] (7.,1.5); 
\draw (6.5,2.) to [bend right] (7.5,2.); 
\draw (8.,2.5) to [bend right] (8.,1.5); 
\draw (8.,3.5) to [bend right] (8.,2.5); 
\draw (6.5,4.) to [bend right] (7.5,4.); 
\draw (5.5,4.) to [bend right] (6.5,4.); 
\draw (5.,3.5) to [bend right] (5.,2.5); 
\draw (5.,2.5) to [bend right] (5.,1.5); 
\draw (6.5,1.) to [bend right] (7.5,1.); 

\begin{scriptsize} 
	\draw [fill=qqqqff] (1.,1.) circle (2.5pt); 
	\draw [fill=qqqqff] (1.,2.) circle (2.5pt); 
	\draw [fill=qqqqff] (2.,1.) circle (2.5pt); 
	\draw [fill=qqqqff] (2.,2.) circle (2.5pt); 
	\draw [fill=qqqqff] (3.,1.) circle (2.5pt); 
	\draw [fill=qqqqff] (3.,2.) circle (2.5pt); 
	\draw [fill=qqqqff] (4.,1.) circle (2.5pt); 
	\draw [fill=qqqqff] (4.,2.) circle (2.5pt); 
	\draw [fill=qqqqff] (4.,3.) circle (2.5pt); 
	\draw [fill=qqqqff] (3.,3.) circle (2.5pt); 
	\draw [fill=qqqqff] (2.,3.) circle (2.5pt); 
	\draw [fill=qqqqff] (1.,3.) circle (2.5pt); 
	\draw [fill=qqqqff] (1.,4.) circle (2.5pt); 
	\draw [fill=qqqqff] (2.,4.) circle (2.5pt); 
	\draw [fill=qqqqff] (3.,4.) circle (2.5pt); 
	\draw [fill=qqqqff] (4.,4.) circle (2.5pt); 
	\draw [color=qqqqff] (5.,1.) circle (1.5pt); 
	\draw [color=qqqqff] (5.,2.) circle (1.5pt); 
	\draw [color=qqqqff] (6.,1.) circle (1.5pt); 
	\draw [color=qqqqff] (6.,2.) circle (1.5pt); 
	\draw [color=qqqqff] (7.,1.) circle (1.5pt); 
	\draw [color=qqqqff] (7.,2.) circle (1.5pt); 
	\draw [color=qqqqff] (8.,1.) circle (1.5pt); 
	\draw [color=qqqqff] (8.,2.) circle (1.5pt); 
	\draw [color=qqqqff] (8.,3.) circle (1.5pt); 
	\draw [color=qqqqff] (7.,3.) circle (1.5pt); 
	\draw [color=qqqqff] (6.,3.) circle (1.5pt); 
	\draw [color=qqqqff] (5.,3.) circle (1.5pt); 
	\draw [color=qqqqff] (5.,4.) circle (1.5pt); 
	\draw [color=qqqqff] (6.,4.) circle (1.5pt); 
	\draw [color=qqqqff] (7.,4.) circle (1.5pt); 
	\draw [color=qqqqff] (8.,4.) circle (1.5pt); 
	\draw [fill=xdxdff] (5.5,3.) circle (2.5pt); 
	\draw [fill=xdxdff] (6.,3.5) circle (2.5pt); 
	\draw [fill=xdxdff] (6.5,3.) circle (2.5pt); 
	\draw [fill=xdxdff] (6.,2.5) circle (2.5pt); 
	\draw [fill=xdxdff] (5.5,2.) circle (2.5pt); 
	\draw [fill=xdxdff] (5.5,1.) circle (2.5pt); 
	\draw [fill=xdxdff] (6.,1.5) circle (2.5pt); 
	\draw [fill=xdxdff] (6.5,1.) circle (2.5pt); 
	\draw [fill=xdxdff] (7.,1.5) circle (2.5pt); 
	\draw [fill=xdxdff] (6.5,2.) circle (2.5pt); 
	\draw [fill=xdxdff] (7.,2.5) circle (2.5pt); 
	\draw [fill=xdxdff] (7.5,2.) circle (2.5pt); 
	\draw [fill=xdxdff] (8.,1.5) circle (2.5pt); 
	\draw [fill=xdxdff] (7.5,1.) circle (2.5pt); 
	\draw [fill=xdxdff] (7.5,3.) circle (2.5pt); 
	\draw [fill=xdxdff] (8.,2.5) circle (2.5pt); 
	\draw [fill=xdxdff] (8.,3.5) circle (2.5pt); 
	\draw [fill=xdxdff] (7.5,4.) circle (2.5pt); 
	\draw [fill=xdxdff] (7.,3.5) circle (2.5pt); 
	\draw [fill=xdxdff] (5.5,4.) circle (2.5pt); 
	\draw [fill=xdxdff] (5.,3.5) circle (2.5pt); 
	\draw [fill=xdxdff] (6.5,4.) circle (2.5pt); 
	\draw [fill=xdxdff] (5.,2.5) circle (2.5pt); 
	\draw [fill=xdxdff] (5.,1.5) circle (2.5pt); 
\end{scriptsize} 
\end{tikzpicture}
\caption{\label{fig:kite}$\mathbb{Z}^{2}$ and the kite graph}

\end{center}
\end{figure}

We have the following relation:
\[
\eqref{eq:DefGenArete}=\mathcal{L}'_{p}\partial_{b'}f-4\partial_{b}f
\]
So we obtain the following relation:
\begin{align*}
\partial_{b}\mathcal{L}_{e}f & =\left(\mathcal{L}_{e}+\mathcal{L}'_{p}-4Id\right)\partial_{b}f\\
 & =\left(L'-4Id\right)F\left(b,\cdot\right)\\
\Rightarrow\partial_{b}P_{t}f & =e^{4t}P_{t}^{L'}F\left(b,\cdot\right)
\end{align*}
Where $F\left(b,\cdot\right)=\partial_{b}f$. Before stating the next
proposition, we define $\left(X'\left(t\right)\right)_{t}$ the simple
random walk on the oriented kite graph of generator $\mathcal{L}_{p}'$.
A result obtained in \cite{deBuyerThese}, where the proof can be
found, is the following:
\begin{prop}
On $\mathbb{Z}^{2}$ with a Gaussian potential, for any oriented edge
$b\in B$, we have the following relation:
\[
\mathbb{E}\left[\frac{1}{2}\eta_{x}^{2}\right]=e^{4t}\left(\mathbb{P}_{b}\left(X'\left(t\right)=b\right)-\mathbb{P}_{b}\left(X'\left(t\right)=\overleftarrow{b}\right)\right)
\]
\end{prop}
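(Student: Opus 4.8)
The plan is to run the scheme of Lemma~\ref{lem:GLEgaliteMarcheAleatoire} again, but starting from the \emph{edge} commutation relation rather than the site one: in place of $\partial_x\mathcal{L}_e=(\mathcal{L}_e+\mathcal{L}_p)\partial_x$ one uses $\partial_b\mathcal{L}_e=(\mathcal{L}_e+\mathcal{L}_p'-4\,\mathrm{Id})\partial_b$ as obtained in $\eqref{eq:DefGenArete}$, and hence its consequence $\partial_b P_t f=e^{4t}P_t^{L'}(\partial_b f)$. The first step is to specialise this to the Gaussian potential $H(\eta)=\frac12\sum_z\eta_z^2$ on $\mathbb{Z}^2$: then the Hessian of $H$ is the identity, so on functions that do not depend on $\eta$ the operator $L'=\mathcal{L}_e+\mathcal{L}_p'$ reduces to $\mathcal{L}_p'$, i.e. to the generator of the simple random walk $X'$ on the oriented kite graph, and $P_t^{L'}$ restricted to such functions is the kite-walk semigroup. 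The prefactor $e^{4t}$ is precisely the residual $-4\,\mathrm{Id}$ computed for $d=2$ in $\eqref{eq:DefGenArete}$.

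Next I would apply the integration-by-parts identity $\eqref{eq:GLIPP}$ with $f(\eta)=\frac12\eta_x^2$ and a well-chosen linear test function $g$ attached to the edge $b$, so that the gradient $\partial_{b'}g=\partial_{b'}\eta_x$ is the indicator-type profile that takes the value $+1$ on the oriented edges issued from $x$, $-1$ on those pointing to $x$, and $0$ elsewhere, and in particular is a function of $b'$ alone. Differentiating $t\mapsto\mathbb{E}[f\,P_t g]$ and substituting $\partial_{b'}P_t g=e^{4t}(P_t^{\mathcal{L}_p'}\partial_{\cdot}g)(b')$, the right-hand side factorises because $g$ is linear and $H$ is Gaussian: the $\eta$-covariances $\mathbb{E}[\eta_x\eta_z]=\delta_{x,z}$ kill all but the diagonal, and what survives is the scalar $e^{4t}$ times a finite linear combination of kite-walk transition probabilities $\mathbb{P}_{b'}(X'(t)=c)$, with $b',c$ ranging over the oriented edges incident to $x$; by an elementary Gaussian integration by parts the value at $t=0$ reduces to the static second moment $\mathbb{E}[\frac12\eta_x^2]$.

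The crucial step is to collapse that finite combination down to the single difference $\mathbb{P}_b(X'(t)=b)-\mathbb{P}_b(X'(t)=\overleftarrow{b})$. Here one uses the reversal involution $b\mapsto\overleftarrow{b}$: it is an automorphism of the kite graph that preserves the (symmetric, unit) jump rates, so $P_t^{\mathcal{L}_p'}$ commutes with it, whereas the profile $\partial_\cdot\eta_x$ is odd under it. Pairing each oriented edge $b'$ with its reverse $\overleftarrow{b'}$, the off-diagonal contributions cancel in pairs and only the ``return minus reversed-return'' term remains; matching $t$-derivatives and the $t=0$ values then yields the stated relation. This reversal-symmetry reduction is exactly the place where working by edge (rather than by site, as in Section~\ref{sec:GLHSParSite}) pays off.

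The main obstacle is twofold. First, $\mathcal{L}_p'-4\,\mathrm{Id}$ is not the generator of a conservative process, so $e^{4t}P_t^{\mathcal{L}_p'}$ is a priori unbounded; to legitimise exchanging $\partial_t$ with $\mathbb{E}$ and to identify limits one should carry the whole computation out on the finite torus $(\mathbb{Z}/N\mathbb{Z})^2$, where every step is finite-dimensional linear algebra and the Gaussian covariances are explicit, and only then let $N\to\infty$, checking that the compensating $e^{4t}$ does not destroy convergence (equivalently, that the odd profile $\partial_\cdot\eta_x$ lives in the portion of the spectrum of $\mathcal{L}_p'$ lying above $4$). Second, one must verify that the reversal cancellation is exact and leaves no residual neighbourhood terms coming from the local geometry of the kite graph around $b$; this combinatorial bookkeeping is the delicate part, and once it and the spectral control above are in hand the remainder is routine.
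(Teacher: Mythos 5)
The paper itself gives no proof of this proposition (it is deferred to the author's thesis), so your plan can only be judged on its own terms. The machinery you invoke is the right one --- the edge intertwining $\partial_{b}P_{t}f=e^{4t}P_{t}^{L'}\partial_{b}f$ from $\eqref{eq:DefGenArete}$, the fact that $P_{t}^{L'}$ reduces to the kite-walk semigroup on $\eta$-independent edge functions, and the observation that edge reversal is an automorphism of the kite graph --- but the central computation you describe does not run. With $f(\eta)=\frac12\eta_{x}^{2}$ and a \emph{linear} test function $g$, the Gaussian conservative dynamics keeps $P_{t}g$ linear and centered, so $\mathbb{E}\left[f\,P_{t}g\right]$ is an odd moment of a centered Gaussian and vanishes identically; in particular its value at $t=0$ is $0$, not $\mathbb{E}\left[\frac12\eta_{x}^{2}\right]$, so the claimed "Gaussian integration by parts at $t=0$" cannot anchor the identity. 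The edge analogue of Lemma \ref{lem:GLEgaliteMarcheAleatoire} that does run is to pair $\partial_{b}H=\eta_{x}-\eta_{y}$ with a linear $g$, via $\mathbb{E}\left[\partial_{b}H\cdot h\right]=\mathbb{E}\left[\partial_{b}h\right]$ and the intertwining, giving $\mathbb{E}\left[\left(\eta_{x}-\eta_{y}\right)P_{t}g\right]=e^{4t}\left(P_{t}^{\mathcal{L}'_{p}}\partial_{\cdot}g\right)\left(b\right)$; but for any choice of $g$ the edge profile $\partial_{\cdot}g$ is a discrete gradient field, hence spread (with values $\pm1,\pm2$) over \emph{all} oriented edges meeting $x$ (and $y$), and is never the bare function $\delta_{b}-\delta_{\overleftarrow{b}}$.

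This makes your "crucial step" the genuine gap: reversal antisymmetry only regroups $\sum_{c}\left(\partial_{c}g\right)\mathbb{P}_{b}\left(X'\left(t\right)=c\right)$ into differences $\mathbb{P}_{b}\left(X'\left(t\right)=c\right)-\mathbb{P}_{b}\left(X'\left(t\right)=\overleftarrow{c}\right)$ over the unoriented edges $c$ in the support of the profile; the terms with $c$ sharing a single endpoint with $b$ do not cancel (there is no kite-graph automorphism fixing $b$ while reversing such a $c$), and $\delta_{b}-\delta_{\overleftarrow{b}}$ is not an eigenfunction of $\mathcal{L}'_{p}$ --- its six kite-neighbours contain neither $b$ nor $\overleftarrow{b}$, so $\mathcal{L}'_{p}\left(\delta_{b}-\delta_{\overleftarrow{b}}\right)$ charges those neighbours and the two-term difference satisfies no closed evolution that symmetry alone could supply. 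Isolating exactly $\mathbb{P}_{b}\left(X'\left(t\right)=b\right)-\mathbb{P}_{b}\left(X'\left(t\right)=\overleftarrow{b}\right)$, with the $e^{4t}$ compensation surviving the infinite-volume limit (the spectral issue you correctly flag but postpone), is precisely the content of the proposition; your proposal defers both points, so as it stands it is a programme rather than a proof, independently of any normalisation questions one might raise about the statement itself.
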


The result can be generalized to a more general class of potential,
i.e. the potential that can be written $H\left(\eta\right)=\sum V_{0}\left(\eta_{x}\right)$
where $V_{0}\in C^{2}\left(\mathbb{R},\mathbb{R}\right)$, $V''_{0}\geqslant C_{-}\in\mathbb{R}$
and $\lim_{\left|t\right|\to\infty}V_{0}\left(t\right)=+\infty$.
However, one can see that this quantity is hard to exploit since we
need to compensate the exponential term.

\bibliographystyle{plain}
\bibliography{bibtex_these-Ginzburg}

\end{document}